%% file: main.tex
\pdfoutput=1
\documentclass[12pt,reqno]{amsart}
\usepackage{amsmath,amsthm,amssymb,amsfonts,mathtools}
\usepackage{bm}
\usepackage{enumitem}
\usepackage{graphicx,latexsym}
\usepackage{algorithm,algorithmic}
\usepackage{booktabs}
\usepackage{array}
\usepackage{url}
\numberwithin{equation}{section}

\makeatletter
\def\subsection{\@startsection{subsection}{2}{\z@}%
  {.7\linespacing\@plus.3\linespacing}{.35\linespacing}%
  {\normalfont\bfseries\def\@secnumfont{\bfseries}}}
\def\paragraph{\@startsection{paragraph}{4}{\z@}%
  {.45\linespacing\@plus.2\linespacing}{.2\linespacing}%
  {\normalfont\bfseries}}
\makeatother

\makeatletter
\def\@captionfont{\scriptsize}
\makeatother

\theoremstyle{plain}
\newtheorem{theorem}{Theorem}[section]
\newtheorem{lemma}[theorem]{Lemma}
\newtheorem{proposition}[theorem]{Proposition}

\theoremstyle{definition}
\newtheorem{definition}[theorem]{Definition}
\newtheorem{remark}[theorem]{Remark}

\newcommand{\diagop}{\operatorname{diag}}
\newcommand{\divop}{\operatorname{div}}
\newcommand{\R}{\mathbb{R}}
\newcommand{\E}{\mathbb{E}}
\newcommand{\ind}{\mathbf{1}}
\newcommand{\norm}[1]{\left\lVert #1\right\rVert}
\newcommand{\inner}[2]{\left\langle #1,#2\right\rangle}

\title[Smooth Hard-Thresholding for Singular Values with SURE]{Smooth Hard-Thresholding for Singular Values with Stein\textquoteright{}s Unbiased Risk Estimate}
\author[Guanzhong Yang]{Guanzhong Yang}
\address{Department of Mathematics\\ Imperial College London, UK}
\email{gy625@ic.ac.uk}
\subjclass[2020]{Primary 62H12; Secondary 62G08, 15A18, 68U10}
\keywords{Stein's unbiased risk estimate, singular value thresholding, spectral estimators, low-rank matrix denoising, smooth hard thresholding}

\begin{document}

\begin{abstract}
Low-rank matrix denoising is a central primitive in patch-based image restoration and many other inverse problems. Classical SVD-based image denoising methods often choose a truncation rank by matching residual singular-value energy with an estimated noise energy, but this rule is not a finite-sample risk principle because a fitted low-rank approximation inevitably absorbs part of the noise. This paper develops a mathematically rigorous alternative based on Stein's unbiased risk estimate (SURE). Since singular value hard thresholding is discontinuous and does not satisfy the hypotheses of Stein's lemma, we introduce a logistic smooth hard-threshold spectral estimator. We prove that the smooth shrinker satisfies the regularity conditions required by a spectral-estimator version of Stein's lemma, and therefore admits an exactly unbiased fixed-threshold risk estimate under Gaussian noise. For a fixed observed matrix and a finite set of candidate thresholds separated from the observed singular values, the ordering of the fixed-threshold smooth SURE objective eventually agrees with a simple limiting score. The limiting score has the same algebraic form as the biased hard-threshold SURE formula, but here it is used only as a computational device for ranking finite candidates. Selecting the minimizing threshold is a data-adaptive tuning step; the selected SURE value should not be interpreted as an unbiased risk estimate of the finally selected estimator.
\end{abstract}

\maketitle

\section{Introduction}

\paragraph{Notation.}
For matrices $A,B$ of the same size, we write
$\inner{A}{B}=\operatorname{tr}(A^TB)$ and
$\norm{A}_F=\inner{A}{A}^{1/2}$ for the Frobenius inner product and Frobenius norm.
All expectations are taken with respect to the Gaussian noise $W$, while the signal matrix $X$ is treated as deterministic.

\paragraph{Observation model.}
Let
\begin{equation}
    Y=X+W\in\R^{m\times n},
\end{equation}
where $X$ is an unknown deterministic matrix, usually assumed to be low-rank or approximately low-rank, and $W_{ij}\stackrel{\mathrm{i.i.d.}}{\sim}N(0,\tau^2)$. Write $k=\min(m,n)$ and use the thin singular value decomposition
\begin{equation}
\begin{gathered}
    Y=U\diagop(\sigma_1,\ldots,\sigma_k)V^{T}, \\
    U\in\R^{m\times k},\qquad V\in\R^{n\times k},
    \qquad U^TU=V^TV=I_k.
\end{gathered}
\end{equation}
with $\sigma_1\geq\cdots\geq\sigma_k\geq 0$. For continuous Gaussian noise, the singular values are distinct and positive with probability one whenever the usual full-rank condition is satisfied; throughout the main derivation we therefore work on the simple full-rank set
\begin{equation}
    \sigma_1>\sigma_2>\cdots>\sigma_k>0.
\end{equation}

\paragraph{SVD energy matching.}
SVD-based low-rank approximation has long been used in image denoising. Guo, Zhang, Zhang and Liu proposed an efficient image denoising method that groups similar patches by block matching, arranges each group as a matrix, and applies SVD truncation to exploit the resulting low-rank structure \cite{guo2016efficient}. The mathematical motivation is the Eckart--Young--Mirsky theorem: keeping the largest singular values gives the best low-rank approximation in Frobenius norm. In their denoising rule, the truncation rank is selected by comparing discarded singular-value energy with the expected noise energy, approximately $mn\tau^2$.

\paragraph{Finite-sample bias of the residual.}
This energy-matching principle is attractive but theoretically limited. If $\widehat X=\widehat X(Y)$ is fitted from the noisy observation, then
\begin{equation}
\begin{aligned}
    \E\norm{Y-\widehat X}_{F}^{2}
    &=\E\norm{X+W-\widehat X}_{F}^{2} \\
    &=\E\norm{\widehat X-X}_{F}^{2}+\E\norm{W}_{F}^{2}-2\E\inner{\widehat X-X}{W}.
\end{aligned}
\end{equation}
A denoising estimator that adapts to $Y$ generally fits some component of the noise, so the correlation term is not zero. Consequently, residual energy should not be treated as an exact finite-sample surrogate for the noise energy or for the mean squared error (MSE)
\begin{equation}
    R(\widehat X,X)=\E\norm{\widehat X(Y)-X}_{F}^{2}.
\end{equation}

\paragraph{SURE and the hard-thresholding obstruction.}
A principled way to estimate risk under Gaussian noise is Stein's unbiased risk estimate. Cand\`es, Sing-Long and Trzasko established SURE formulae for singular value thresholding and, more generally, spectral estimators satisfying mild differentiability and integrability assumptions \cite{candes2013unbiased}. The hard-threshold family is a natural target in the present setting because classical SVD denoising is truncation-based: the leading singular components are kept at their observed magnitudes, while smaller components are discarded. Soft thresholding has a well-developed SURE theory, but it shrinks every retained singular value and therefore represents a different spectral shrinkage family. If the aim is to stay close to rank-truncated SVD denoising while replacing residual-energy matching by a risk-based finite-candidate tuning principle, the natural computational goal is to compare candidate thresholds, equivalently possible retained ranks, by SURE-type scores and select the lowest-scoring candidate. The corresponding hard-threshold spectral rule is
\begin{equation}
    \widehat X_{\lambda}(Y)
    =U\diagop(f_{\lambda}(\sigma_1),\ldots,f_{\lambda}(\sigma_k))V^{T},
    \qquad
    f_{\lambda}(x)=x\ind\{x>\lambda\}.
\end{equation}
This rule, however, contains a jump discontinuity at $x=\lambda$. Hansen emphasized that differentiability almost everywhere is not enough for Stein's lemma; in particular, singular value hard thresholding does not satisfy the relevant condition, and the corresponding hard-threshold SURE expression is not an unbiased estimate of risk \cite{hansen2017comment}.

\paragraph{Fixed-rank comparison.}
There is a related but distinct fixed-rank viewpoint. For a deterministic rank $h$, the reduced-rank estimator that keeps exactly the largest $h$ singular components has its own degrees-of-freedom and SURE theory; such formulae were studied by Mukherjee, Chen, Wang and Zhu in multivariate regression, and Hansen clarified that the fixed-rank reduced-rank estimator satisfies the relevant Stein conditions \cite{mukherjee2015degrees,hansen2017comment}. Consequently, for the finite-candidate comparison just described, one could obtain a deterministic-rank ranking score through fixed-rank SURE. The present paper nevertheless follows the smooth-threshold route because it addresses a different question. Fixed-rank SURE begins by taking the retained rank as the parameter; the truncation problem above begins from a threshold rule and asks how a hard-threshold-looking score can arise from legitimate fixed-threshold SURE formulae despite the discontinuity of singular value hard thresholding. This observation does not remove the tuning problem, because the rank or threshold must still be chosen. We therefore construct fixed-threshold smooth SURE formulae and then use realized finite candidate values only as plug-in scores for ranking possible retained ranks, with the data-dependence caveats made explicit below.

\paragraph{Contribution and organization.}
The goal of this paper is therefore not to deny the biasedness of hard-threshold SURE. Instead, we introduce a smooth hard-threshold approximation, prove that it satisfies the hypotheses under which SURE is exactly unbiased for every fixed threshold, and then use a limiting finite-candidate argument to obtain a simple and computationally efficient threshold-selection rule. The resulting procedure is a SURE-based tuning strategy, not a claim that the minimized SURE value is an unbiased risk estimate after threshold selection. Section 2 defines the smooth spectral estimator and proves the regularity needed for Stein's identity. Section 3 derives the finite-candidate limiting rule. Section 4 clarifies the precise interpretation of the limiting score and the post-selection caveat. Section 5 reports numerical checks of the fixed-threshold identity and post-selection caveat, evaluates the oracle-style performance of the ranking rule, describes the complete SVD denoising pipelines, gives a paired BSD68 statistical comparison of the two SVD rank-selection rules, and then gives an illustrative fixed-realization comparison. Section 6 concludes.

\section{Smooth Hard-Threshold Spectral Estimator}

\subsection{Estimator and candidate thresholds}
\paragraph{Estimator.}
For fixed threshold $\lambda\geq 0$ and smoothness parameter $\omega>0$, define
\begin{equation}
    f_{\omega,\lambda}(x)
    =\frac{x}{1+\exp[-\omega(x-\lambda)]},
    \qquad x\in\R.
\end{equation}
The associated spectral estimator is
\begin{equation}
    \widehat X_{\omega,\lambda}(Y)
    =U\diagop(f_{\omega,\lambda}(\sigma_1),\ldots,f_{\omega,\lambda}(\sigma_k))V^{T}.
\end{equation}
For every $x\neq\lambda$,
\begin{equation}
    f_{\omega,\lambda}(x)\longrightarrow f_{\lambda}(x)=x\ind\{x>\lambda\}
    \qquad\text{as }\omega\to\infty.
\end{equation}
Thus $f_{\omega,\lambda}$ is a smooth approximation to hard thresholding away from the discontinuity.

\paragraph{Candidate thresholds.}
Given a simple full-rank observation $Y$, define the nonzero-rank threshold candidates by
\begin{equation}\label{eq:candidate-set}
    \mathcal B(Y)
    =\left\{\lambda_h=\frac{\sigma_h+\sigma_{h+1}}{2}:h=1,
    \ldots,k-1\right\}\cup\left\{\lambda_k=\frac{\sigma_k}{2}\right\}.
\end{equation}
For the zero-rank candidate, fix any realized numerical threshold $\lambda_0>\sigma_1$, equivalently the candidate that discards all singular components, and write
\[
    \mathcal B_0(Y)=\{\lambda_0\}\cup\mathcal B(Y).
\]
For $\lambda_h$ with $h\geq1$, exactly the first $h$ singular values are retained by the limiting hard-threshold rule.

\begin{remark}[Data-dependent candidate values]
The thresholds in $\mathcal B_0(Y)$ are computed from the observed singular values and are therefore random before $Y$ is observed. The fixed-threshold SURE identity below applies only to deterministic thresholds. Thus, if $\lambda_h(Y)$ is regarded as part of the estimator $Y\mapsto\widehat X_{\omega,\lambda_h(Y)}(Y)$, the fixed-threshold divergence formula does not by itself give an unbiased risk estimate for that data-dependent estimator. In the ranking step below, we use the realized numbers $\lambda_h(Y)$ only as plug-in candidate values at the fixed observed matrix and compare the corresponding fixed-threshold SURE formula values. No unbiasedness claim is made for the random scores $Y\mapsto\operatorname{SURE}_{\omega,\lambda_h(Y)}(Y)$.
\end{remark}

\subsection{Regularity and Unbiased SURE for Fixed Threshold}
\paragraph{Regularity of estimator.}
\begin{lemma}[Global Lipschitz continuity of the scalar shrinker]\label{lem:scalar-lip}
For every $\omega>0$ and $\lambda\geq 0$, the function $f_{\omega,\lambda}$ is globally Lipschitz on $\R$.
\end{lemma}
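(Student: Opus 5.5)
The plan is to show that $f_{\omega,\lambda}$ is $C^1$ with a bounded derivative on $\R$; the mean value theorem then gives the global Lipschitz property with constant $\sup_x|f'_{\omega,\lambda}(x)|$. Write $s(x)=1/(1+\exp[-\omega(x-\lambda)])$ for the logistic factor, so that $f_{\omega,\lambda}(x)=x\,s(x)$. The function $s$ is smooth, takes values in $(0,1)$, and satisfies $s'(x)=\omega s(x)(1-s(x))$. Hence
\[
f_{\omega,\lambda}'(x)=s(x)+\omega x\,s(x)\bigl(1-s(x)\bigr).
\]
The first term lies in $(0,1)$, so everything reduces to bounding $g(x)=\omega x\,s(x)(1-s(x))$ uniformly in $x$.

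For the bound, substitute $t=\omega(x-\lambda)$, so that $\omega x=t+\omega\lambda$ and $s(1-s)=e^{-t}/(1+e^{-t})^2=\tfrac14\operatorname{sech}^2(t/2)$. This gives
\[
|g(x)|\le \tfrac14\,|t|\operatorname{sech}^2(t/2)+\tfrac14\,\omega\lambda\operatorname{sech}^2(t/2).
\]
The second term is at most $\omega\lambda/4$. For the first, use $\operatorname{sech}^2(t/2)\le 4e^{-|t|}$ together with $\sup_{t}|t|e^{-|t|}=e^{-1}$, which bounds it by $e^{-1}$. Altogether,
\[
|f'_{\omega,\lambda}(x)|\le 1+e^{-1}+\tfrac{\omega\lambda}{4}\quad\text{for all }x\in\R .
\]
This constant is finite for each fixed $\omega>0$ and $\lambda\ge0$, and the mean value theorem finishes the proof.

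The only real obstacle is the growth of the factor $x$ in $x\,s(1-s)$ as $|x|\to\infty$. On the side $x\to-\infty$, the factor $x$ itself is unbounded, but it is multiplied by $s(x)\approx e^{\omega(x-\lambda)}$, which decays exponentially. On the side $x\to+\infty$, the factor $1-s(x)$ decays exponentially. The change of variables to $t$ and the elementary estimate $|t|e^{-|t|}\le e^{-1}$ handle both tails at once.

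An alternative that avoids explicit constants is an asymptotic argument: $f'(x)\to1$ as $x\to+\infty$ and $f'(x)\to0$ as $x\to-\infty$, and a continuous function with finite limits at both ends is bounded. I would include the explicit bound anyway, since it makes clear that the Lipschitz constant depends on $\omega\lambda$. The same bound is likely to be reused when the Stein-lemma hypotheses for the spectral estimator are verified later in this section.
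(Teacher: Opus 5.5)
Your proof is correct, and its skeleton (bounded derivative plus the mean value theorem) is the same as the paper's. The difference is in how you show $\sup_x|f'_{\omega,\lambda}(x)|<\infty$.

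The paper uses exactly the qualitative argument you list as an alternative. It notes that $f'_{\omega,\lambda}$ is continuous with limits $1$ at $+\infty$ and $0$ at $-\infty$. Hence $f'_{\omega,\lambda}$ is bounded by $2$ outside some $[-M,M]$ and bounded on $[-M,M]$ by compactness.

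Your main route is quantitative instead. With $t=\omega(x-\lambda)$ you write $s(1-s)=\tfrac14\operatorname{sech}^2(t/2)$, then use $\operatorname{sech}^2(t/2)\le 4e^{-|t|}$ and $|t|e^{-|t|}\le e^{-1}$. This gives the explicit bound $1+e^{-1}+\omega\lambda/4$, and every step checks out.

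The paper's version is shorter and only asserts that some finite $L_{\omega,\lambda}$ exists. That is all Proposition~\ref{prop:sure-unbiased} actually uses, so your remark that the explicit constant will be reused there does not match how the paper proceeds.

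What your route buys is sharpness information. Since $f'_{\omega,\lambda}(\lambda)=\tfrac12+\omega\lambda/4$, your bound is tight up to an additive constant. So the optimal Lipschitz constant grows linearly in $\omega\lambda$ and is unbounded as $\omega\to\infty$ whenever $\lambda>0$. For $\lambda=0$ it stays bounded, consistent with $x\ind\{x>0\}$ being continuous. This quantifies how the regularity behind Stein's lemma is lost in the limit, where the hard-threshold rule jumps by $\lambda$.
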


\begin{proof}
\emph{Derivative bound.} The derivative is
\begin{equation}
    f'_{\omega,\lambda}(x)
    =\frac{1}{1+e^{-\omega(x-\lambda)}}
    +\frac{\omega x e^{-\omega(x-\lambda)}}{(1+e^{-\omega(x-\lambda)})^2}.
\end{equation}
\emph{Compact-tail decomposition.} This derivative is continuous on $\R$. Moreover,
\begin{equation}
    \lim_{x\to\infty}f'_{\omega,\lambda}(x)=1,
    \qquad
    \lim_{x\to-\infty}f'_{\omega,\lambda}(x)=0.
\end{equation}
Hence there exists $M>0$ such that $|f'_{\omega,\lambda}(x)|\leq 2$ for $|x|>M$. On the compact interval $[-M,M]$, continuity gives $\sup_{|x|\leq M}|f'_{\omega,\lambda}(x)|<\infty$. Therefore $\sup_{x\in\R}|f'_{\omega,\lambda}(x)|<\infty$, and the mean value theorem implies global Lipschitz continuity.
\end{proof}

\begin{lemma}[Regularity of the spectral estimator]\label{lem:spectral-lip}
For every fixed $\omega>0$ and $\lambda\geq0$, the map $Y\mapsto\widehat X_{\omega,\lambda}(Y)$ is globally Lipschitz as a matrix-valued spectral function and satisfies $\widehat X_{\omega,\lambda}(0)=0$.
\end{lemma}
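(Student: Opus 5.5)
The plan is to reduce the matrix statement to the scalar Lipschitz property of Lemma~\ref{lem:scalar-lip}, using the standard transfer principle for spectral functions. The route is through the symmetric Jordan--Wielandt dilation. For $Y\in\R^{m\times n}$ set
\[
\mathcal J(Y)=\begin{pmatrix}0&Y\\ Y^T&0\end{pmatrix},
\]
a symmetric matrix whose eigenvalues are $\pm\sigma_i$ together with $|m-n|$ zeros. Since $f_{\omega,\lambda}$ is neither odd nor zero-preserving in general, I will not apply $f_{\omega,\lambda}$ directly. Instead I use the odd extension $g(x)=\operatorname{sign}(x)f_{\omega,\lambda}(|x|)$ with $g(0)=0$.

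The first step is to check that $g$ is globally Lipschitz. On $[0,\infty)$, $f_{\omega,\lambda}$ is Lipschitz with constant $L=\sup|f'_{\omega,\lambda}|<\infty$ by Lemma~\ref{lem:scalar-lip}. We also have $f_{\omega,\lambda}(0)=0$, so $g$ is continuous at $0$ and odd. For $x<0<y$,
\[
|g(y)-g(x)|\le |g(y)|+|g(x)|\le L(y-x),
\]
so $g$ is $L$-Lipschitz on $\R$.

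The second step is the identification $g(\mathcal J(Y))=\mathcal J(\widehat X_{\omega,\lambda}(Y))$, where $g$ acts on eigenvalues. Writing the eigenvectors of $\mathcal J(Y)$ as $(u_i,\pm v_i)/\sqrt2$, plus kernel vectors on which $g(0)=0$, one computes that $g(\mathcal J(Y))$ has off-diagonal block $\sum_i f_{\omega,\lambda}(\sigma_i)u_iv_i^T$. This identification also shows that $\widehat X_{\omega,\lambda}$ is well defined independently of the choice of SVD, including when singular values coincide or vanish.

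The third step invokes the known fact that a scalar $L$-Lipschitz function induces an operator function on symmetric matrices that is Lipschitz in the Frobenius norm with the same constant $L$. This follows from the divided-difference/Daleckii--Krein representation, or from the simpler argument for the Frobenius norm: diagonalize $A=P\Lambda P^T$ and $B=Q\mathrm M Q^T$, write $P^T(g(A)-g(B))Q$ entrywise as $(g(\lambda_i)-g(\mu_j))(P^TQ)_{ij}$ and $P^T(A-B)Q$ entrywise as $(\lambda_i-\mu_j)(P^TQ)_{ij}$, and bound termwise. Since $\|\mathcal J(Z)\|_F=\sqrt2\|Z\|_F$, this gives
\[
\|\widehat X_{\omega,\lambda}(Y)-\widehat X_{\omega,\lambda}(Y')\|_F\le L\|Y-Y'\|_F .
\]
Finally, $\widehat X_{\omega,\lambda}(0)=0$ holds because all singular values of $0$ vanish and $f_{\omega,\lambda}(0)=0$.

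I expect the main obstacle to be the care needed at degenerate points: ties and zero singular values, and the rectangular case with the extra zero eigenvalues. The odd extension together with $f_{\omega,\lambda}(0)=0$ is what makes this work. The Frobenius transfer argument itself is the standard one of Andersson--Carlsson--Perfekt, or of \cite{candes2013unbiased}, and can be cited.
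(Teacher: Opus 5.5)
Your proof is correct, but it takes a different route from the paper.

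\textbf{The paper's route.} After noting $f_{\omega,\lambda}(0)=0$, the paper simply cites the Andersson--Carlsson--Perfekt theorem: a Lipschitz $f:[0,\infty)\to\R$ with $f(0)=0$ induces a Frobenius-Lipschitz singular-value functional calculus on rectangular matrices.

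\textbf{Your route.} You prove that transfer yourself, in three steps.
\begin{enumerate}[label=(\roman*)]
    \item The Jordan--Wielandt dilation $\mathcal J$ and the odd extension $g$ turn the singular-value calculus into the ordinary eigenvalue calculus, via $g(\mathcal J(Y))=\mathcal J(\widehat X_{\omega,\lambda}(Y))$.
    \item The entrywise bound $|g(\lambda_i)-g(\mu_j)|\,|(P^TQ)_{ij}|\le L|\lambda_i-\mu_j|\,|(P^TQ)_{ij}|$ gives the Frobenius estimate for symmetric matrices.
    \item The identity $\norm{\mathcal J(Z)}_F=\sqrt2\norm{Z}_F$ transfers that estimate back to rectangular matrices.
\end{enumerate}

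\textbf{What each approach buys.} The paper's argument is shorter and leaves all degenerate cases to the cited theorem. Yours is self-contained and gives more:
\begin{enumerate}[label=(\roman*)]
    \item It yields an explicit constant equal to the scalar Lipschitz constant. This cannot be improved, as diagonal test matrices show.
    \item It shows that $\widehat X_{\omega,\lambda}$ does not depend on the choice of SVD at tied or zero singular values, a point the paper never addresses.
    \item It makes clear where $f_{\omega,\lambda}(0)=0$ enters. That condition is exactly what makes the odd extension Lipschitz across $0$, and what sends the $|m-n|$ kernel eigenvalues and any zero singular values to $0$.
\end{enumerate}

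\textbf{One small slip.} You say $f_{\omega,\lambda}$ is not zero-preserving, but it \emph{is}: $f_{\omega,\lambda}(0)=0$, as you yourself use a few lines later. The only reason not to apply $f_{\omega,\lambda}$ directly to $\mathcal J(Y)$ is that it is not odd. Since $f_{\omega,\lambda}(-\sigma_i)\neq-f_{\omega,\lambda}(\sigma_i)$, doing so would produce nonzero diagonal blocks and the wrong off-diagonal block.
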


\begin{proof}
\emph{Origin condition.} The identity $\widehat X_{\omega,\lambda}(0)=0$ is immediate from $f_{\omega,\lambda}(0)=0/(1+e^{\omega\lambda})=0$. \emph{Spectral Lipschitz transfer.} By Lemma \ref{lem:scalar-lip}, the restriction of $f_{\omega,\lambda}$ to $[0,\infty)$ is Lipschitz and satisfies $f_{\omega,\lambda}(0)=0$. We use the operator-Lipschitz estimate for the singular value functional calculus of Andersson, Carlsson and Perfekt \cite{andersson2016operator}: if $f:[0,\infty)\to\R$ is Lipschitz and $f(0)=0$, then the singular-value map
\begin{equation}
    G_f(Y)=U\diagop(f(\sigma_1(Y)),\ldots,f(\sigma_k(Y)))V^T
\end{equation}
is Lipschitz with respect to the Frobenius norm. Hence $Y\mapsto\widehat X_{\omega,\lambda}(Y)$ is globally Lipschitz.
\end{proof}

\paragraph{Unbiased SURE identity.}
\begin{proposition}[Unbiased SURE for smooth hard thresholding]\label{prop:sure-unbiased}
Assume $Y=X+W$ with independent $N(0,\tau^2)$ entries. For fixed $\omega>0$ and $\lambda\geq0$, the SURE functional
\begin{equation}\label{eq:sure-general}
    \operatorname{SURE}_{\omega,\lambda}(Y)
    =-mn\tau^2+\norm{\widehat X_{\omega,\lambda}(Y)-Y}_{F}^{2}
    +2\tau^2\divop(\widehat X_{\omega,\lambda})(Y)
\end{equation}
satisfies
\begin{equation}
    \E\operatorname{SURE}_{\omega,\lambda}(Y)
    =\E\norm{\widehat X_{\omega,\lambda}(Y)-X}_{F}^{2}.
\end{equation}
\end{proposition}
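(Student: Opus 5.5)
The plan is to reduce the statement to the multivariate Stein identity for weakly differentiable maps, with Lemma \ref{lem:spectral-lip} supplying the regularity. Write $g(Y)=\widehat X_{\omega,\lambda}(Y)$ and expand
\[
\norm{g(Y)-X}_F^2=\norm{g(Y)-Y}_F^2+\norm{W}_F^2+2\inner{g(Y)-Y}{W}.
\]
Taking expectations, $\E\norm{W}_F^2=mn\tau^2$ and $\E\inner{Y}{W}=\E\norm{W}_F^2=mn\tau^2$, since $X$ is deterministic. Hence
\[
\E\norm{g(Y)-X}_F^2=\E\norm{g(Y)-Y}_F^2-mn\tau^2+2\E\inner{g(Y)}{W}.
\]
It therefore suffices to prove the Stein identity $\E\inner{g(Y)}{W}=\tau^2\,\E\,\divop(g)(Y)$, where $\divop(g)=\sum_{i,j}\partial g_{ij}/\partial Y_{ij}$.

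For the Stein step, view $Y$ as a Gaussian vector in $\R^{mn}$ with mean $X$ and covariance $\tau^2 I$. By Lemma \ref{lem:spectral-lip}, $g$ is globally Lipschitz. Rademacher's theorem then gives that $g$ is differentiable almost everywhere. Moreover, a Lipschitz map is absolutely continuous along almost every line parallel to each coordinate axis, with bounded partial derivatives; equivalently, $g$ lies in $W^{1,\infty}_{\mathrm{loc}}$ and its weak and a.e.\ derivatives coincide. This is exactly the hypothesis of Stein's lemma, the one Hansen stresses and hard thresholding fails.

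I then apply the one-dimensional Stein lemma coordinatewise, freezing all other entries of $W$. Along almost every line, $\E[W_{ij}\,g_{ij}(Y)\mid \text{other entries}]=\tau^2\E[\partial_{ij}g_{ij}(Y)\mid\cdot]$; this follows by integrating by parts against the Gaussian density, which is legitimate for absolutely continuous functions. Summing over $(i,j)$ and using Fubini gives the identity.

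The integrability conditions also follow from Lipschitz continuity. Together with $g(0)=0$, it gives $\norm{g(Y)}_F\le L\norm{Y}_F$, and $|\partial_{ij}g_{ij}|\le L$. Hence $\E\norm{g(Y)}_F^2<\infty$ and the divergence is bounded, so all expectations above are finite and Fubini applies. This yields the result with $\operatorname{SURE}_{\omega,\lambda}$ as in \eqref{eq:sure-general}. The divergence is understood as the a.e.\ derivative; on the simple full-rank set it can be computed by the Cand\`es--Sing-Long--Trzasko spectral formula, since $f_{\omega,\lambda}$ is smooth there.

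The main obstacle is justifying the integration by parts. Almost-everywhere differentiability alone is insufficient; absolute continuity on almost every line is what is needed, and that is precisely why the global Lipschitz property from Lemma \ref{lem:spectral-lip} is invoked rather than merely smoothness of the scalar $f_{\omega,\lambda}$. The singular-vector map is not smooth where singular values collide or vanish, so one cannot differentiate $g$ directly everywhere. I would rely on the Lipschitz transfer lemma together with the standard fact that Lipschitz functions are weakly differentiable; this sidesteps the null set of non-simple spectra entirely.
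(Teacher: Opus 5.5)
Your proposal is correct and follows essentially the same route as the paper. Both arguments use Lemma \ref{lem:spectral-lip} to get global Lipschitz continuity, hence weak differentiability, the bound $\norm{\widehat X_{\omega,\lambda}(Y)}_F\le L\norm{Y}_F$ from the origin condition, and an a.e.\ bound $mnL$ on the divergence, so the Gaussian Stein identity applies; the only difference is that you derive that identity yourself (risk expansion, then coordinatewise one-dimensional integration by parts and Fubini), whereas the paper cites Stein's lemma via Cand\`es--Sing-Long--Trzasko and Hansen.
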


\begin{proof}
\emph{Application of Stein's identity.} Cand\`es, Sing-Long and Trzasko show that Lipschitz spectral estimators with the required integrability are weakly differentiable and satisfy Stein's identity \cite{candes2013unbiased}. Hansen also stresses that global Lipschitz continuity is a sufficient condition for Stein's lemma in the matrix setting, after identifying $\R^{m\times n}$ with $\R^{mn}$ \cite{hansen2017comment}. Lemma \ref{lem:spectral-lip} gives this Lipschitz condition and the origin condition. To make the integrability explicit, let $L_{\omega,\lambda}<\infty$ be a Lipschitz constant for $Y\mapsto\widehat X_{\omega,\lambda}(Y)$. Since $\widehat X_{\omega,\lambda}(0)=0$,
\begin{equation}
    \norm{\widehat X_{\omega,\lambda}(Y)}_F
    \leq L_{\omega,\lambda}\norm{Y}_F,
\end{equation}
and therefore $\E\norm{\widehat X_{\omega,\lambda}(Y)-Y}_F^2<\infty$ under Gaussian noise. By Rademacher's theorem the Lipschitz map is differentiable almost everywhere and weakly differentiable; its weak Jacobian is essentially bounded by $L_{\omega,\lambda}$, so
\begin{equation}
    |\divop(\widehat X_{\omega,\lambda})(Y)|
    \leq mn L_{\omega,\lambda}
    \quad\text{for almost every }Y.
\end{equation}
Thus $\E|\divop(\widehat X_{\omega,\lambda})(Y)|<\infty$. The hypotheses of the Gaussian Stein identity are therefore satisfied, and \eqref{eq:sure-general} is an unbiased estimate of the fixed-threshold risk.
\end{proof}

\paragraph{Closed-form divergence.}
For a simple full-rank matrix, the divergence formula for the real spectral estimator
\[
    G_f(Y)=U\diagop(f(\sigma_1),\ldots,f(\sigma_k))V^T
\]
is
\begin{equation}\label{eq:divergence}
    \divop(G_f)(Y)
    =|m-n|\sum_{i=1}^{k}\frac{f(\sigma_i)}{\sigma_i}
    +\sum_{i=1}^{k}f'(\sigma_i)
    +2\sum_{\substack{i,j=1\\i\neq j}}^{k}
        \frac{\sigma_i f(\sigma_i)}{\sigma_i^2-\sigma_j^2},
\end{equation}
which is the real-valued divergence formula for spectral estimators derived in \cite{candes2013unbiased}. Substituting $f=f_{\omega,\lambda}$ gives
\begin{equation}\label{eq:smooth-sure}
\begin{aligned}
    \operatorname{SURE}_{\omega,\lambda}(Y)
    ={}&-mn\tau^2+
    \sum_{i=1}^{k}\bigl(\sigma_i-f_{\omega,\lambda}(\sigma_i)\bigr)^2 \\
    &+2\tau^2\Biggl\{
    |m-n|\sum_{i=1}^{k}\frac{f_{\omega,\lambda}(\sigma_i)}{\sigma_i}
    +\sum_{i=1}^{k}f'_{\omega,\lambda}(\sigma_i) \\
    &\hspace{3.5cm}
    +2\sum_{\substack{i,j=1\\i\neq j}}^{k}
        \frac{\sigma_i f_{\omega,\lambda}(\sigma_i)}{\sigma_i^2-\sigma_j^2}
    \Biggr\}.
\end{aligned}
\end{equation}

\section{Threshold Selection over a Finite Candidate Set}

\subsection{Limiting scores}
We next show how the fixed-threshold smooth SURE formula can be ranked efficiently after the candidate values in $\mathcal B_0(Y)$ have been computed for a fixed observed matrix $Y$. The role of SURE here must be interpreted carefully. For any deterministic threshold $\lambda\geq0$, Proposition \ref{prop:sure-unbiased} gives an unbiased risk estimate. The candidate thresholds in $\mathcal B_0(Y)$, however, are data-dependent before observation. Therefore, Section 3 does not assert that $\operatorname{SURE}_{\omega,\lambda_h(Y)}(Y)$ is an unbiased risk estimate for the random-threshold estimator. Instead, for the realized matrix $Y$, we treat each $\lambda_h(Y)$ as a numerical input to the fixed-threshold formula and compare the resulting plug-in scores. The analysis below is a deterministic pointwise ranking statement for the realized matrix: as $\omega\to\infty$, the ordering of these plug-in scores agrees with the ordering of the limiting scores $S_h(Y)$, whenever the limiting inequalities are strict.

\begin{definition}[Limiting candidate score]\label{def:limiting-score}
Define the zero-rank score by
\begin{equation}\label{eq:zero-score}
    S_0(Y)=\norm{Y}_F^2-mn\tau^2.
\end{equation}
For $h=1,\ldots,k$, define
\begin{equation}\label{eq:limiting-score}
    S_h(Y)
    =-mn\tau^2+
    \sum_{i=h+1}^{k}\sigma_i^2
    +2\tau^2\left\{|m-n|h+h
    +2\sum_{i=1}^{h}\sum_{\substack{j=1\\j\neq i}}^{k}
        \frac{\sigma_i^2}{\sigma_i^2-\sigma_j^2}
    \right\},
\end{equation}
with the convention that an empty sum is zero. The score $S_0$ is the pointwise limit of the fixed-threshold smooth SURE formula for any realized numerical threshold larger than $\sigma_1$; it represents the candidate that discards all singular components.
\end{definition}

For $h=0$, use the zero-rank candidate $\lambda_0$ introduced above. For $h\geq1$, this expression is obtained from \eqref{eq:smooth-sure} by replacing $f_{\omega,\lambda_h}(\sigma_i)$ with $\sigma_i\ind\{i\leq h\}$ and $f'_{\omega,\lambda_h}(\sigma_i)$ with $\ind\{i\leq h\}$, which is valid pointwise as $\omega\to\infty$ because $\lambda_h\notin\{\sigma_1,\ldots,\sigma_k\}$.

\begin{lemma}[Pointwise convergence on candidates]\label{lem:pointwise}
For every $h=0,1,\ldots,k$,
\begin{equation}
    \operatorname{SURE}_{\omega,
    \lambda_h}(Y)\longrightarrow S_h(Y)
    \qquad\text{as }\omega\to\infty.
\end{equation}
\end{lemma}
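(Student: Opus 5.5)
The plan is to work directly with the closed-form expression \eqref{eq:smooth-sure}. For the fixed observed simple full-rank matrix $Y$ and a fixed candidate index $h$, the threshold $\lambda=\lambda_h$ is a fixed real number, and everything except $f_{\omega,\lambda}(\sigma_i)$ and $f'_{\omega,\lambda}(\sigma_i)$ is independent of $\omega$. The expression in \eqref{eq:smooth-sure} is a finite sum of continuous functions of these $2k$ quantities. It involves only the fixed coefficients $1/\sigma_i$ and $\sigma_i/(\sigma_i^2-\sigma_j^2)$, which are finite on the simple full-rank set. Hence it suffices to identify the limits of $f_{\omega,\lambda_h}(\sigma_i)$ and $f'_{\omega,\lambda_h}(\sigma_i)$ as $\omega\to\infty$ for each $i$, and then pass to the limit term by term.

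\textbf{Step 1: separation.} For every $h$ the threshold $\lambda_h$ is distinct from all singular values. For $1\le h\le k-1$ we have $\sigma_{h+1}<\lambda_h<\sigma_h$, and for $h=k$ we have $0<\lambda_k<\sigma_k$. For $h=0$ we have $\lambda_0>\sigma_1$. So $\delta_i:=\sigma_i-\lambda_h\neq0$, with $\delta_i>0$ exactly when $i\le h$.

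\textbf{Step 2: scalar limits.} We have $f_{\omega,\lambda}(\sigma_i)=\sigma_i/(1+e^{-\omega\delta_i})$. This tends to $\sigma_i$ if $\delta_i>0$ and to $0$ if $\delta_i<0$, that is, to $\sigma_i\ind\{i\le h\}$. For the derivative we use the formula from Lemma \ref{lem:scalar-lip}. Its first term tends to $\ind\{i\le h\}$. Its second term is $\omega\sigma_i e^{-\omega\delta_i}/(1+e^{-\omega\delta_i})^2$, and this is the one delicate point, since $\omega$ grows. It is bounded by $\omega\sigma_i e^{-\omega|\delta_i|}$: for $\delta_i>0$ drop the denominator, and for $\delta_i<0$ use $(1+e^{-\omega\delta_i})^2\ge e^{-2\omega\delta_i}$. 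Since $\omega e^{-\omega|\delta_i|}\to0$ whenever $|\delta_i|>0$, the second term vanishes. Thus $f'_{\omega,\lambda_h}(\sigma_i)\to\ind\{i\le h\}$. This is the only step that genuinely needs the strict separation of Step 1; it is what fails for hard thresholding at the jump.

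\textbf{Step 3: assemble.} Substitute these limits into \eqref{eq:smooth-sure}.
\begin{itemize}
\item The residual term becomes $\sum_{i>h}\sigma_i^2$.
\item The $|m-n|$ term becomes $|m-n|h$.
\item The derivative sum becomes $h$.
\item In the cross term only the indices $i\le h$ survive, each contributing $\sigma_i^2/(\sigma_i^2-\sigma_j^2)$.
\end{itemize}
This reproduces \eqref{eq:limiting-score} for $h\ge1$.

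For $h=0$ all indicators vanish. The residual term becomes $\sum_i\sigma_i^2=\norm{Y}_F^2$ and the divergence terms all vanish, giving $S_0(Y)$ in \eqref{eq:zero-score}.
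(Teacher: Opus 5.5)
Your proposal is correct and follows the paper's own argument. You use the separation $\sigma_i\neq\lambda_h$ to get $f_{\omega,\lambda_h}(\sigma_i)\to\sigma_i\ind\{i\le h\}$ and $f'_{\omega,\lambda_h}(\sigma_i)\to\ind\{i\le h\}$, then pass the limit through the finitely many terms of \eqref{eq:smooth-sure}. Your bound $\omega\sigma_i e^{-\omega|\delta_i|}$ on the second derivative term supplies a detail the paper only asserts. One small correction to your side remark: separation is also needed to identify the value limit, since at $\delta_i=0$ one has $f_{\omega,\lambda}(\sigma_i)=\sigma_i/2$ for every $\omega$.
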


\begin{proof}
\emph{Avoiding the discontinuity.} For $h=0$, choose any realized numerical threshold $\lambda_0>\sigma_1$, so that all singular components are discarded in the limiting rule. For $h\geq1$, $\lambda_h$ lies strictly between adjacent singular values, or below $\sigma_k$ when $h=k$. Hence every difference $\sigma_i-\lambda_h$ is nonzero. Therefore
\begin{equation}
    f_{\omega,\lambda_h}(\sigma_i)\to \sigma_i\ind\{i\leq h\},
    \qquad
    f'_{\omega,\lambda_h}(\sigma_i)\to \ind\{i\leq h\}.
\end{equation}
\emph{Finite-sum limit.} The formula \eqref{eq:smooth-sure} contains only finitely many algebraic operations and finite sums over the fixed singular values. Passing the limit through these finite sums gives \eqref{eq:zero-score} for $h=0$ and \eqref{eq:limiting-score} for $h\geq1$.
\end{proof}

\begin{theorem}[Eventual preservation of strict ordering]\label{thm:ordering}
Let $h,\ell\in\{0,1,\ldots,k\}$. If $S_h(Y)<S_{\ell}(Y)$, then there exists $\Omega_{h\ell}<\infty$ such that for all $\omega>\Omega_{h\ell}$,
\begin{equation}
    \operatorname{SURE}_{\omega,\lambda_h}(Y)
    <\operatorname{SURE}_{\omega,\lambda_{\ell}}(Y).
\end{equation}
Consequently, because the extended candidate set including $h=0$ is finite, if $h^*$ is the unique minimizer of $S_h(Y)$ over $h=0,1,\ldots,k$, then $\lambda_{h^*}$ is also the minimizer of $\operatorname{SURE}_{\omega,\lambda_h}(Y)$ over the corresponding plug-in thresholds for all sufficiently large $\omega$.
\end{theorem}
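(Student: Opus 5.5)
The plan is to derive the ordering statement directly from Lemma \ref{lem:pointwise} with an elementary $\varepsilon$-argument. The observed matrix $Y$, the candidate values $\lambda_0,\lambda_1,\ldots,\lambda_k$, and therefore every plug-in score are held fixed throughout. The only parameter that varies is $\omega$. So the whole question reduces to finitely many real sequences indexed by $\omega$, each of which converges by Lemma \ref{lem:pointwise}.

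\emph{Pairwise step.} Fix $h,\ell$ with $S_h(Y)<S_\ell(Y)$ and set
\[
\delta=\tfrac12\bigl(S_\ell(Y)-S_h(Y)\bigr)>0.
\]
By Lemma \ref{lem:pointwise} there is $\Omega_h$ such that $|\operatorname{SURE}_{\omega,\lambda_h}(Y)-S_h(Y)|<\delta$ for all $\omega>\Omega_h$. Likewise there is $\Omega_\ell$ with the analogous bound for $\ell$. Take $\Omega_{h\ell}=\max(\Omega_h,\Omega_\ell)$. Then for $\omega>\Omega_{h\ell}$ we get
\[
\operatorname{SURE}_{\omega,\lambda_h}(Y)<S_h(Y)+\delta=S_\ell(Y)-\delta<\operatorname{SURE}_{\omega,\lambda_\ell}(Y),
\]
which is the claimed strict inequality. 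For the case $h=0$ or $\ell=0$, I will note that $\lambda_0$ is a single fixed realized number exceeding $\sigma_1$, so Lemma \ref{lem:pointwise} applies to it without any change.

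\emph{Consequence for the minimizer.} Suppose $h^*$ is the unique minimizer of $S_h(Y)$ over $\{0,\ldots,k\}$. Then $S_{h^*}(Y)<S_\ell(Y)$ for each of the $k$ indices $\ell\neq h^*$. Set
\[
\Omega^*=\max_{\ell\neq h^*}\Omega_{h^*\ell},
\]
which is a maximum of finitely many finite numbers and hence finite. For $\omega>\Omega^*$, the pairwise step applied to every $\ell\neq h^*$ shows that $\operatorname{SURE}_{\omega,\lambda_{h^*}}(Y)$ is strictly smaller than every other plug-in score. So $\lambda_{h^*}$ is the unique minimizer for all such $\omega$.

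No step is a genuine obstacle, since the analytic content is already contained in Lemma \ref{lem:pointwise}. The one point needing care is to state that the limit is pointwise in the realized $Y$, so the thresholds $\Omega_{h\ell}$ depend on $Y$. There is no uniformity in $Y$ and no probabilistic claim, consistent with the remark on data-dependent candidate values. Finiteness of the candidate set is exactly what turns the pairwise thresholds into a single finite $\Omega^*$.
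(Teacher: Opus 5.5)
Your proof is correct and follows the paper's argument. The only difference is cosmetic: the paper applies Lemma~\ref{lem:pointwise} to the difference $\operatorname{SURE}_{\omega,\lambda_h}(Y)-\operatorname{SURE}_{\omega,\lambda_\ell}(Y)\to S_h(Y)-S_\ell(Y)<0$ rather than using your explicit $\delta$-bands, and then takes the maximum of the finitely many $\Omega_{h^*\ell}$ exactly as you do.
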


\begin{proof}
\emph{Pairwise comparison.} By Lemma \ref{lem:pointwise},
\begin{equation}
    \operatorname{SURE}_{\omega,\lambda_h}(Y)-
    \operatorname{SURE}_{\omega,\lambda_{\ell}}(Y)
    \longrightarrow S_h(Y)-S_{\ell}(Y)<0.
\end{equation}
\emph{Finite minimization.} The definition of convergence implies that the left-hand side is negative for all sufficiently large $\omega$. The minimizer statement follows by applying this argument to the finitely many pairs $(h^*,\ell)$ with $\ell\neq h^*$ and taking the maximum of the corresponding thresholds $\Omega_{h^*\ell}$.
\end{proof}

\begin{remark}[Ties]
If several $S_h(Y)$ attain the same minimum, the limiting ordering alone does not select a unique candidate. For computational parsimony one may choose the largest threshold among the minimizers, equivalently the smallest retained rank. This tie-breaking rule affects only exactly tied limiting scores. In numerical implementation, the denominators $\sigma_i^2-\sigma_j^2$ in \eqref{eq:limiting-score} should also be protected when singular values are nearly tied. A concrete rule is to set a tolerance
\begin{equation}
    \varepsilon_{\mathrm{sv}}
    =c_{\mathrm{sv}}\,\epsilon_{\mathrm{mach}}\max\{\sigma_1^2,1\},
\end{equation}
with $c_{\mathrm{sv}}$ a modest safety constant, and to regard a pair as numerically tied whenever $|\sigma_i^2-\sigma_j^2|\leq\varepsilon_{\mathrm{sv}}$. When such a near tie is detected, the affected limiting-score comparisons should be treated as numerically unreliable rather than as reliable evidence for a particular global minimizer. In implementation one should either avoid splitting nearly tied singular values or fall back to a conservative rule, such as choosing the smallest retained rank among candidates whose scores remain distinguishable without near-tie denominators. Exact ties have probability zero under the continuous Gaussian model but can occur up to floating-point precision in patch groups.
\end{remark}

\subsection{Recursive ranking}
The limiting scores can be compared recursively. From \eqref{eq:zero-score} and \eqref{eq:limiting-score}, for $h=0,
\ldots,k-1$,
\begin{equation}\label{eq:increment}
\begin{aligned}
    S_{h+1}(Y)-S_h(Y)
    ={}&-\sigma_{h+1}^{2} \\
    &+2\tau^2\left\{|m-n|+1
    +2\sum_{\substack{j=1\\j\neq h+1}}^{k}
    \frac{\sigma_{h+1}^{2}}{\sigma_{h+1}^{2}-\sigma_j^{2}}
    \right\}.
\end{aligned}
\end{equation}
Thus all candidate scores may be ranked by computing the initial value $S_0(Y)$ and then accumulating the adjacent differences. The selected rank is
\begin{equation}
    \widehat h\in\arg\min_{0\leq h\leq k} S_h(Y),
    \qquad
    \widehat\lambda=\lambda_{\widehat h}.
\end{equation}
After the ranking step, one can then report the corresponding hard-truncated approximation $\widehat X_{\widehat\lambda}$, with the convention that $\widehat h=0$ returns the zero matrix, when a computationally sharper low-rank output is desired. The fixed-threshold SURE identity remains rigorous for deterministic $\lambda$; after the data-adaptive choice $\widehat\lambda=\widehat\lambda(Y)$ has been made, the selected SURE value should be regarded as a tuning criterion rather than as an unbiased estimate of the selected estimator's risk.

\section{Interpretation of the Limiting Score}

\paragraph{Role of the limiting score.}
The score \eqref{eq:limiting-score} has the same algebraic form as the expression obtained by formally substituting hard thresholding into the spectral divergence formula. Hansen showed that such a hard-threshold expression is not, in general, an unbiased risk estimate for singular value hard thresholding \cite{hansen2017comment}. The present use of \eqref{eq:limiting-score} is therefore deliberately narrower: it is a finite-candidate ranking score obtained as the large-$\omega$ limit of valid fixed-threshold smooth SURE formulae, not a new unbiased risk identity for hard thresholding.

\paragraph{Relation to fixed-rank SURE.}
The same limiting score is also closely related to the fixed-rank reduced-rank estimator. For deterministic $h$, retaining the first $h$ singular components gives the fixed-rank estimator
\begin{equation}
    Y\mapsto U\diagop(\sigma_1,\ldots,\sigma_h,0,\ldots,0)V^T,
\end{equation}
which is distinct from fixed-threshold hard thresholding as a statistical parametrization. Hansen's positive result for fixed-rank reduced-rank estimators therefore does not contradict his negative result for hard thresholding \cite{hansen2017comment}. Algebraically, \eqref{eq:limiting-score} agrees with the fixed-rank reduced-rank score: the retained-retained part of the double sum pairs to $h(h-1)$, giving
\begin{equation}
\begin{aligned}
    S_h(Y)
    ={}&-mn\tau^2+\sum_{i=h+1}^{k}\sigma_i^2 \\
    &+2\tau^2\left\{h(|m-n|+h)
    +2\sum_{i=1}^{h}\sum_{j=h+1}^{k}
        \frac{\sigma_i^2}{\sigma_i^2-\sigma_j^2}\right\}.
\end{aligned}
\end{equation}
This identifies an algebraic connection with fixed-rank SURE, while the threshold-based derivation above gives the interpretation used in this paper.

\paragraph{Why use the smooth-threshold route?}
The smooth-threshold construction is not intended to be a stronger alternative to the fixed-rank degrees-of-freedom theory. Rather, it answers a more specific question. Classical SVD denoising is naturally phrased as a cutoff rule: keep large observed singular values and remove small ones. Soft thresholding has a clean SURE theory, but it changes this truncation principle by shrinking every retained singular value. Hard thresholding preserves the truncation principle, but its discontinuity prevents the formal hard-threshold SURE expression from being unbiased. The smooth route gives a controlled way to stay within a threshold-parametrized family: for every deterministic $\lambda$ and finite $\omega$ the estimator satisfies the Stein regularity assumptions, and the limiting finite-candidate argument then recovers the simple ranking score. Thus the mathematical contribution is modest but precise: it justifies a hard-threshold-looking ranking rule through smooth fixed-threshold SURE, rather than claiming a new unbiased SURE formula for discontinuous or data-selected hard thresholding.

\paragraph{SURE tuning and post-selection risk.}
The construction separates three statements:
\begin{enumerate}[label=(\roman*)]
    \item for fixed deterministic $\lambda$, \eqref{eq:smooth-sure} is unbiased;
    \item for realized finite candidates, the plug-in smooth scores converge to $S_h(Y)$;
    \item strict inequalities among finitely many limiting scores are eventually preserved.
\end{enumerate}
Only the first statement is an unbiased-risk identity. It applies to a deterministic threshold:
\begin{equation}
    \E\operatorname{SURE}_{\omega,\lambda}(Y)
    =\E\norm{\widehat X_{\omega,\lambda}(Y)-X}_F^2.
\end{equation}
Once the midpoint candidates $\lambda_h(Y)$ are computed from the observed singular values, or once a threshold is selected by minimizing the same scores, this identity no longer applies automatically. The divergence of the resulting map would include the dependence of the chosen threshold on $Y$, and the selected score has the usual optimism of model and tuning-parameter selection \cite{efron1986biased,efron2004prediction}. Thus SURE minimization is used here as a tuning rule, not as an unbiased post-selection risk estimator.

\section{Empirical Evaluation}

This section reports three matrix-level checks and two image-level comparisons. The matrix experiments verify fixed-threshold SURE, illustrate post-selection optimism, and compare the induced rank rule with an oracle and residual-energy matching. The image experiments insert the same rank-selection rule into the original SVD denoising pipeline. The empirical contribution is the local rank-rule replacement, not a new patch-search or aggregation architecture: the Set12 table gives fixed-realization context, while the BSD68 experiment gives a paired statistical comparison of energy matching and the SURE-modified SVD rule.

For the matrix-level experiments, eight Set12 images are normalized to $[0,1]$, overlapping $32\times32$ patches are extracted with stride four, and three query patches are sampled from each image. For each query patch, normalized cross-correlation is computed against all other patches from the same image, and the top $50$ most similar patches are retained for the diagnostic patch-group display; this matching step is separate from the full image-denoising patch search in Algorithm \ref{alg:sure-svd}. In the numerical risk experiments, the clean matrix $X\in\R^{32\times32}$ is the query patch itself, with no rank truncation before adding independent Gaussian noise
\begin{equation}
    Y_r=X+W_r,
    \qquad
    (W_r)_{ij}\stackrel{\mathrm{i.i.d.}}{\sim}N(0,\tau^2),
    \qquad r=1,\ldots,N.
\end{equation}
Losses are measured by the Frobenius error
\begin{equation}
    L(\widehat X;X,Y_r)=\norm{\widehat X(Y_r)-X}_F^2.
\end{equation}
Oracle ranks use the clean matrix $X$ only as a benchmark and are not available to a practical denoising method. Implementation details such as random seeds, grids, tie breaking and near-tie tolerances are recorded in the replication code at \url{https://github.com/ECFDPB/SURE-SVD-Denoising}.

The patch-group illustration documents the data construction, but it is not one of the three validation plots. It is included here as an unnumbered setup display so that the numbering of the three main empirical figures remains unchanged.
\begin{center}
\makebox[\textwidth][c]{\includegraphics[width=1.28\textwidth]{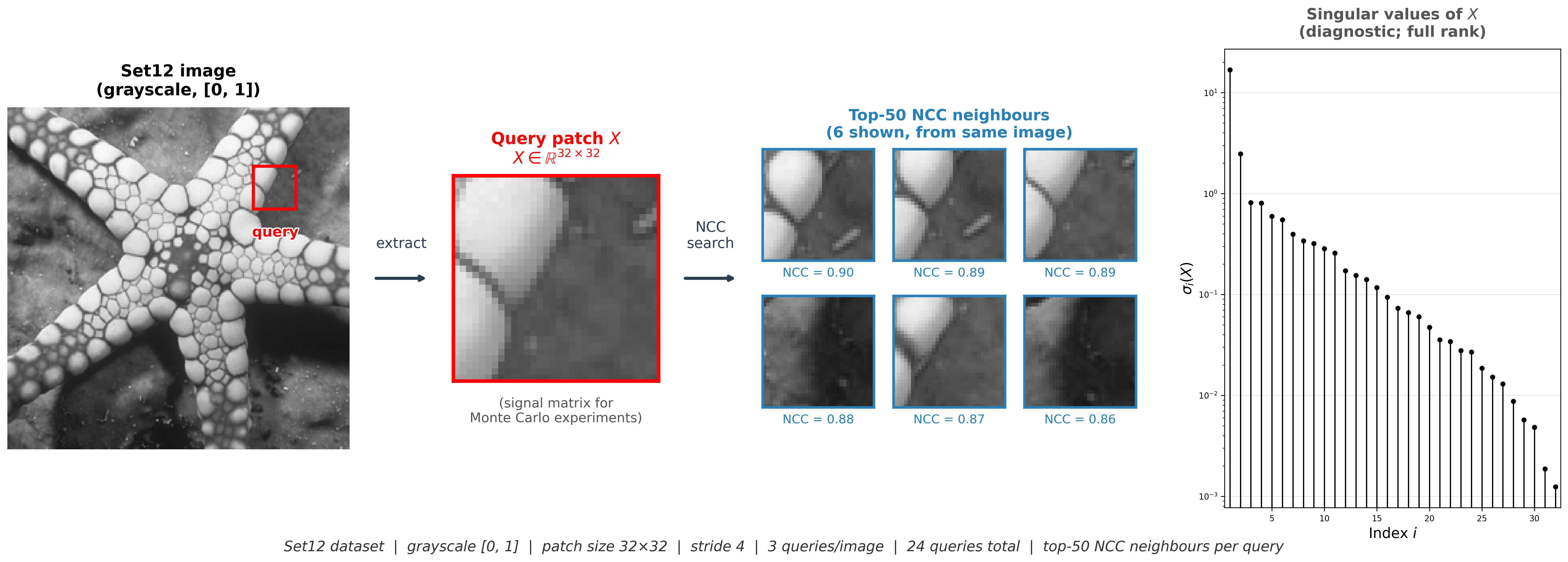}}
\par\smallskip
{\scriptsize Patch construction schematic: a query patch is extracted from a normalized grayscale image, compared with patches from the same image by normalized cross-correlation, and associated with its nearest patch neighbours. Pixel intensities are normalized in this diagnostic display to compare relative eigenvalue magnitudes; this normalization is separate from the $[0,255]$ image-denoising pipeline. The matrix-level experiments below use the query patch itself as the clean matrix $X$.\par}
\end{center}

\subsection{Monte Carlo check of fixed-threshold unbiasedness}

The first experiment tests Proposition \ref{prop:sure-unbiased} with fixed deterministic $(\omega,\lambda)$. Over independent replicates we compare
\begin{equation}
    \widehat R_{\omega,
    \lambda}
    =\frac{1}{N}\sum_{r=1}^{N}
    \norm{\widehat X_{\omega,
    \lambda}(Y_r)-X}_F^2,
    \qquad
    \widehat S_{\omega,
    \lambda}
    =\frac{1}{N}\sum_{r=1}^{N}
    \operatorname{SURE}_{\omega,
    \lambda}(Y_r)
\end{equation}
which should agree up to Monte Carlo error.

Figure \ref{fig:exp1-sure-unbiasedness} shows the expected agreement. Panel (a) compares Monte Carlo mean SURE with Monte Carlo mean loss, and panel (b) reports the absolute relative discrepancy
\begin{equation}
    \frac{|\widehat S_{\omega,
    \lambda}-\widehat R_{\omega,
    \lambda}|}{\widehat R_{\omega,
    \lambda}}
\end{equation}
as a function of $\omega$. The discrepancies remain small over the tested range of smoothing parameters, including large $\omega$ where the logistic shrinker
is close to hard thresholding away from the threshold. This is consistent with the theoretical statement. The observed nonzero discrepancies are finite Monte Carlo errors, not evidence of systematic bias.

\begin{figure}[t]
\centering
\begin{minipage}[t]{0.47\textwidth}
\centering
\includegraphics[width=\textwidth]{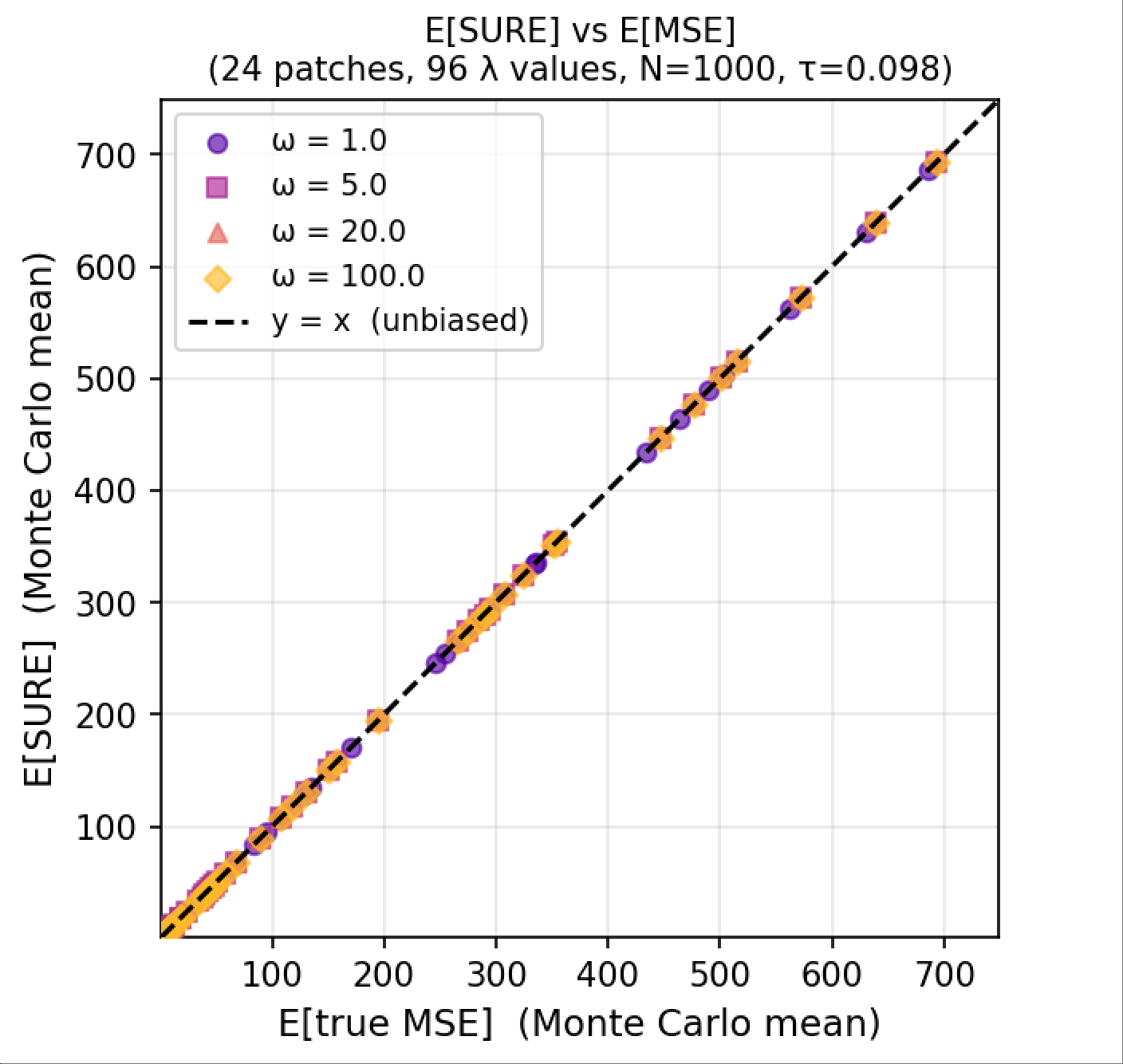}
\par\smallskip
{\small (a) Monte Carlo mean SURE versus Monte Carlo mean true loss for fixed $(\lambda,\omega)$.}
\end{minipage}\hfill
\begin{minipage}[t]{0.47\textwidth}
\centering
\includegraphics[width=\textwidth]{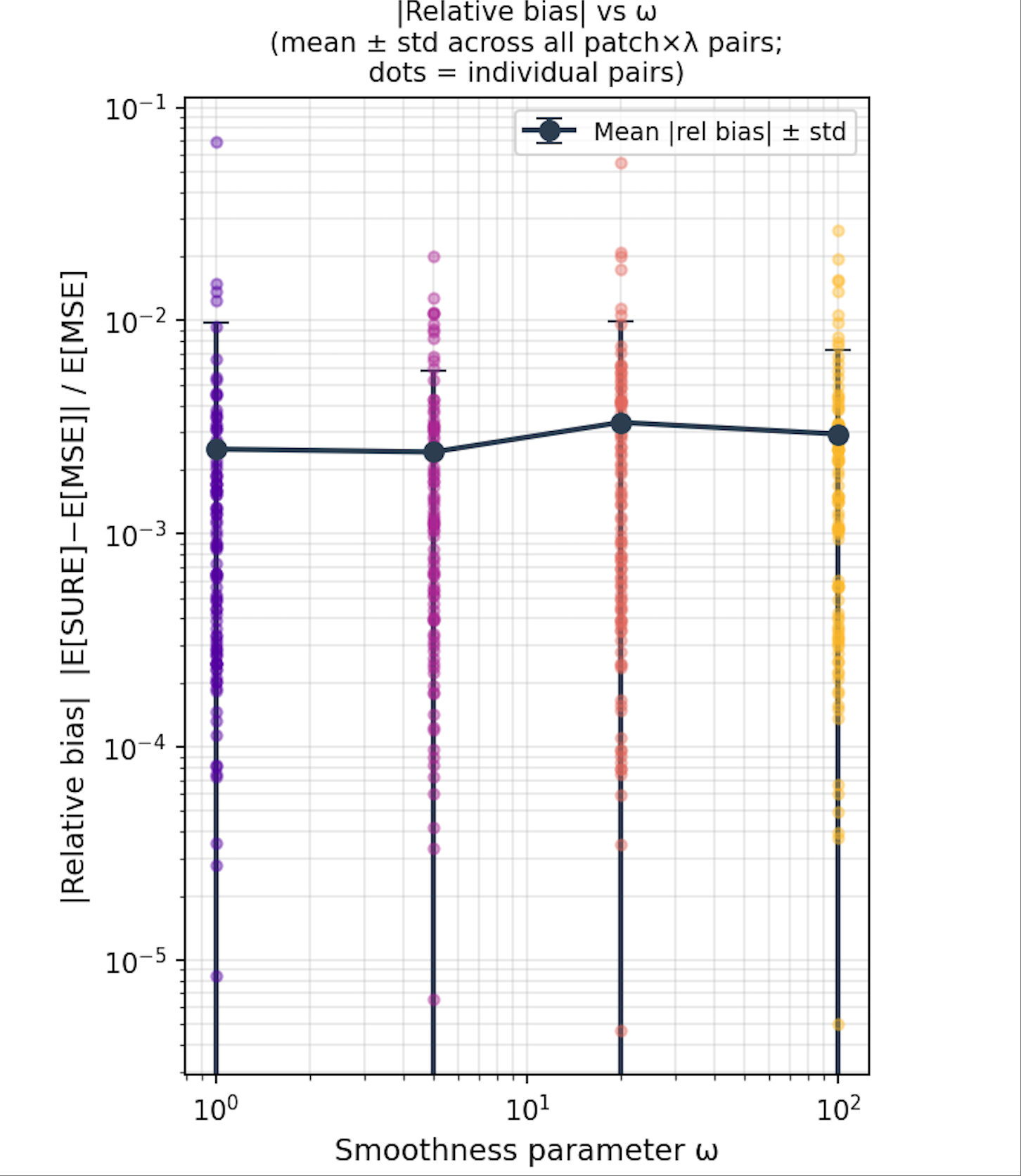}
\par\smallskip
{\small (b) Absolute relative discrepancy as a function of the smoothness parameter $\omega$.}
\end{minipage}
\caption{Monte Carlo verification of the fixed-threshold identity in Proposition \ref{prop:sure-unbiased}. The thresholds are deterministic in this experiment, so the setup matches the hypotheses of the SURE theorem. The agreement between the Monte Carlo mean SURE and the Monte Carlo mean true loss confirms the implementation of \eqref{eq:smooth-sure} up to simulation error.}
\label{fig:exp1-sure-unbiasedness}
\end{figure}

\subsection{Post-selection optimism and low-risk rank selection}

The second experiment illustrates the caveat in Section 4. For each noisy matrix, the limiting scores are minimized over a finite rank set $\mathcal H$ to select rank:
\begin{equation}
    \widehat h_{\mathrm{SURE}}(Y_r)
    \in\arg\min_{h\in\mathcal H} S_h(Y_r).
\end{equation}
The selected score $S_{\widehat h_{\mathrm{SURE}}}(Y_r)$ is compared with the true loss of the selected estimator and with the oracle rank
\begin{equation}
    h_{\mathrm{or}}(Y_r)
    \in\arg\min_{h\in\mathcal H} L(\widehat X_h;X,Y_r),
\end{equation}
where $\widehat X_h$ is the hard rank-$h$ truncation. This oracle is used only to measure how much excess loss is incurred by the data-driven rank selection.

Figure \ref{fig:exp2-postselection} shows both effects: the selected SURE score is optimistic as a risk estimate, but the SURE-selected rank has true loss close to the oracle. Thus the score is used as a tuning criterion, not as an unbiased post-selection risk estimate.

\begin{figure}[t]
\centering
\begin{minipage}[t]{0.43\textwidth}
\centering
\includegraphics[width=\textwidth]{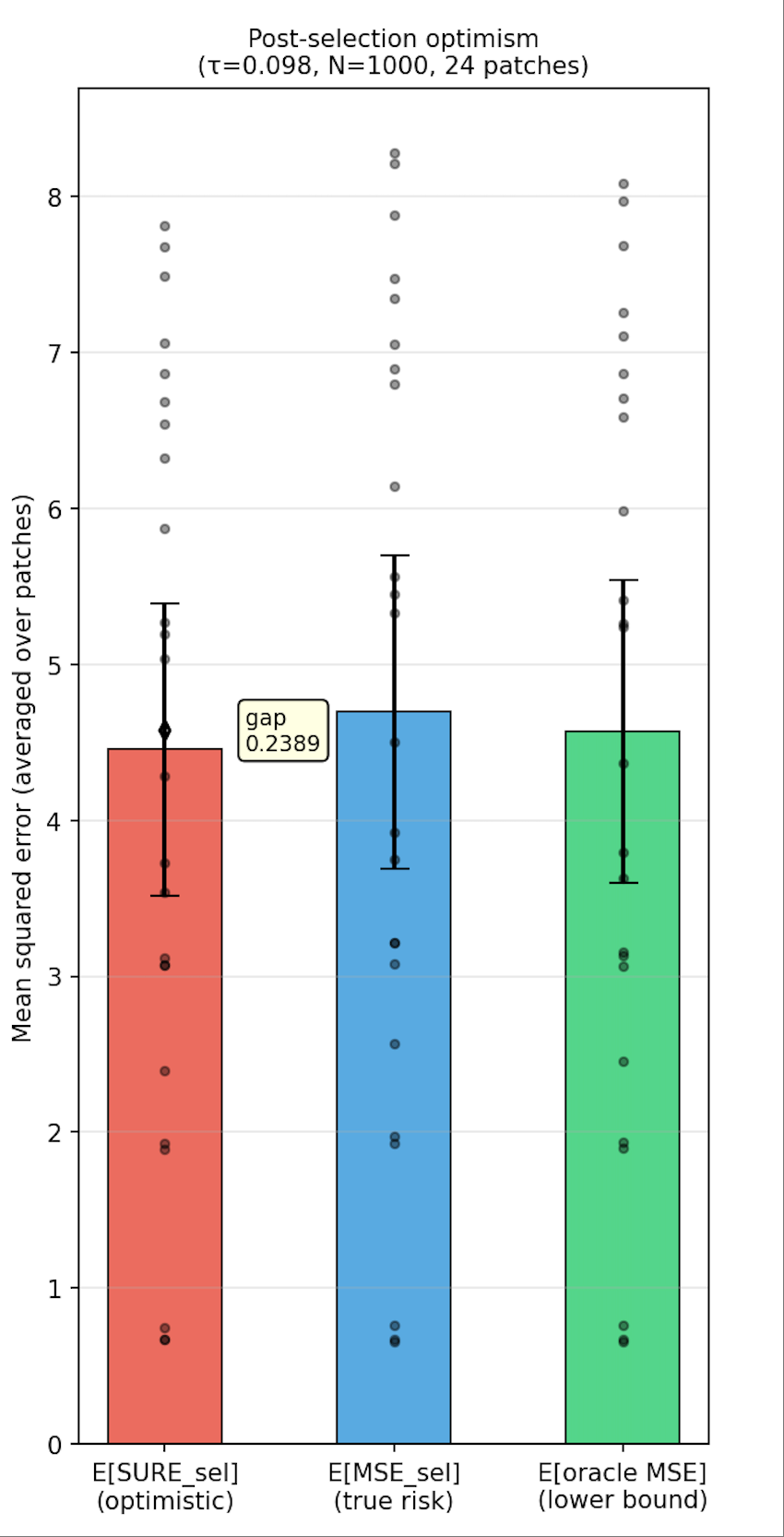}
\par\smallskip
{\small (a) Selected SURE score, selected true loss, and oracle loss.}
\end{minipage}\hfill
\begin{minipage}[t]{0.47\textwidth}
\centering
\includegraphics[width=\textwidth]{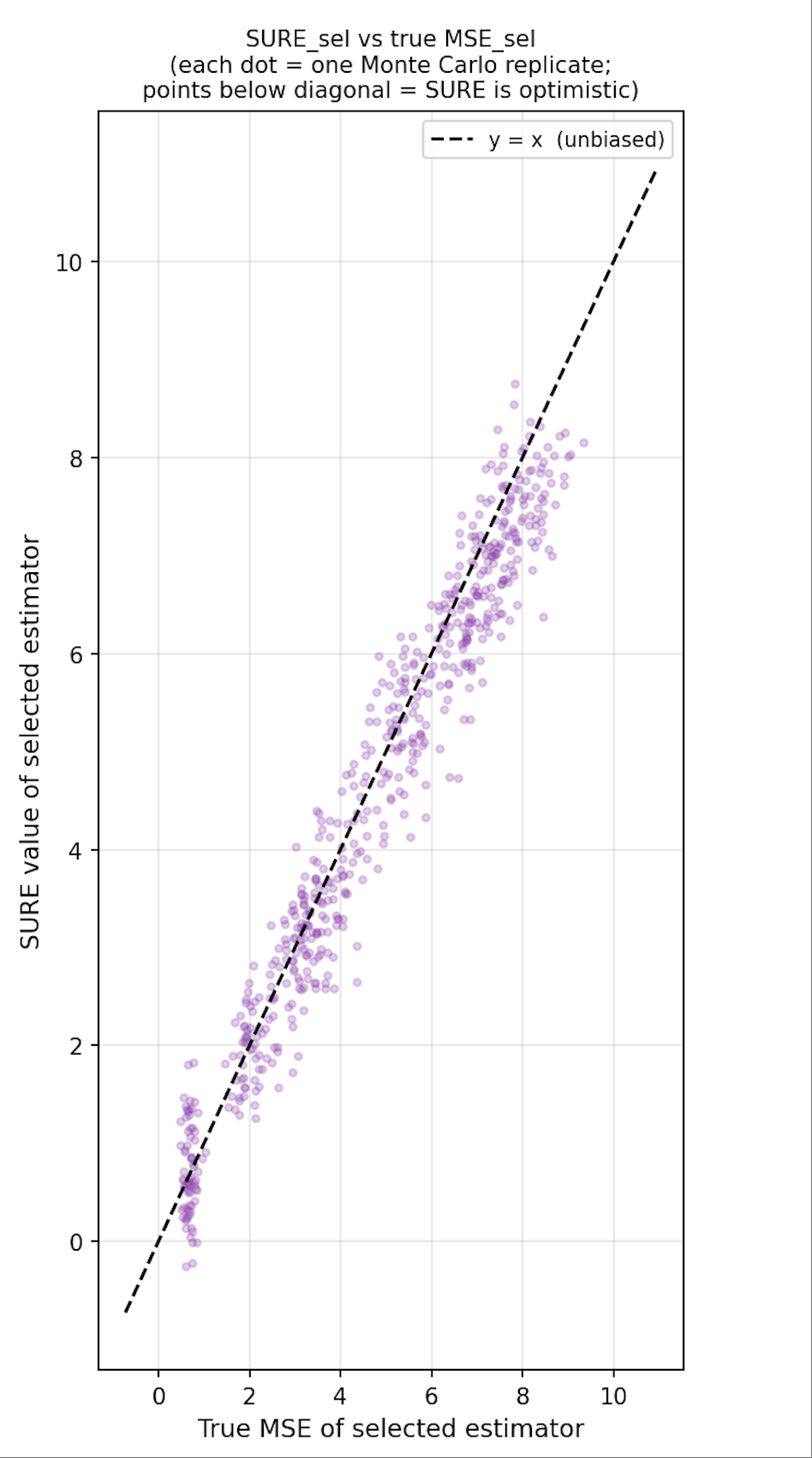}
\par\smallskip
{\small (b) Replicate-level selected SURE score versus selected true loss.}
\end{minipage}
\caption{Post-selection behaviour of the plug-in ranking step. The selected SURE value is optimistic after minimization over candidate ranks, as shown by the gap between selected SURE and selected true loss and by the scatter plot lying below the diagonal. At the same time, the true loss of the SURE-selected rank is close to the oracle loss, showing that the ranking rule can still choose a low-risk rank.}
\label{fig:exp2-postselection}
\end{figure}

\subsection{Oracle-style comparison with energy matching}

The third experiment compares oracle, SURE and residual-energy rank selection over several noise levels $\tau$. The oracle rule chooses the best rank using $X$ and is included only as a benchmark.

For a method $M$, define its empirical oracle efficiency at noise level $\tau$ by
\begin{equation}
    \operatorname{Eff}_{\tau}(M)
    =\frac{\widehat{\E}_{\tau}\,L(\widehat X_{h_{\mathrm{or}}};X,Y)}
    {\widehat{\E}_{\tau}\,L(\widehat X_{h_M};X,Y)}.
\end{equation}
Values close to one indicate near-oracle performance, while smaller values indicate larger excess risk.

Figure \ref{fig:exp3-oracle-comparison} shows that SURE closely tracks the oracle curve and has higher oracle efficiency than residual-energy matching, consistent with the degrees-of-freedom correction in the SURE score.

\begin{figure}[t]
\centering
\begin{minipage}[t]{0.48\textwidth}
\centering
\includegraphics[width=\textwidth]{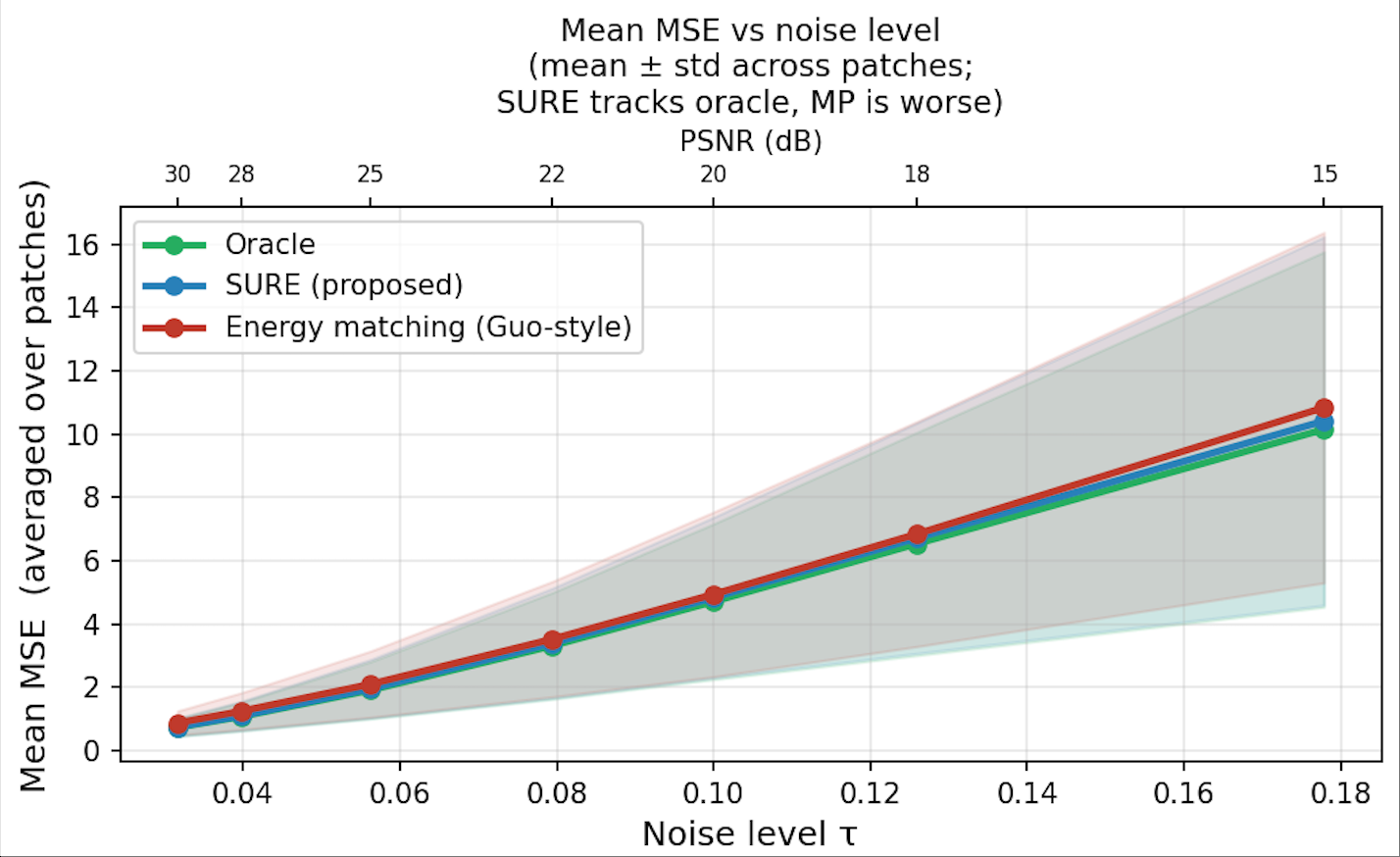}
\par\smallskip
{\small (a) Mean MSE as a function of the noise standard deviation $\tau$.}
\end{minipage}\hfill
\begin{minipage}[t]{0.48\textwidth}
\centering
\includegraphics[width=\textwidth]{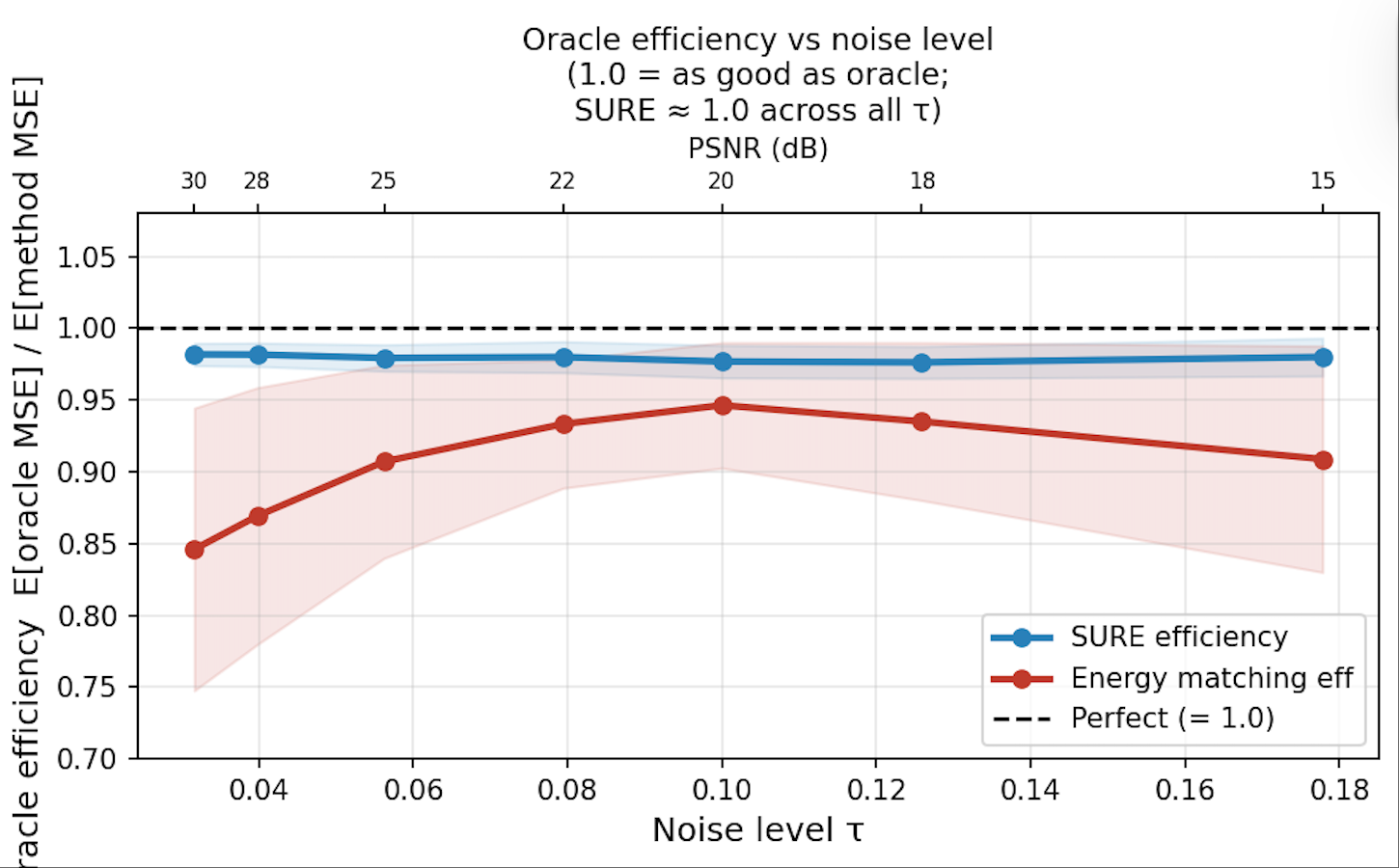}
\par\smallskip
{\small (b) Oracle efficiency as a function of $\tau$.}
\end{minipage}
\caption{Oracle-style comparison of rank-selection rules. The proposed SURE rule remains close to the oracle benchmark across the tested noise levels, while the residual-energy matching baseline has larger excess risk and lower oracle efficiency. The oracle is not implementable in practice; it is used only to quantify the excess loss of the data-driven rules.}
\label{fig:exp3-oracle-comparison}
\end{figure}

\subsection{Complete SVD denoising pipelines}

The preceding experiments isolate rank selection for one noisy matrix. The image-denoising experiment applies the same local SVD truncation primitive to patch-group matrices. In the synthetic setting one observes
\begin{equation}
    Y=X+W,
    \qquad W_{ab}\stackrel{\mathrm{i.i.d.}}{\sim}N(0,\tau^2),
\end{equation}
computes the SVD of $Y$, selects a rank $h$, and returns the hard rank-$h$ truncation. In the image setting this operation is applied to group matrices $P_j\in\R^{m\times n_j}$ formed from similar patches. This is an algorithmic transfer, not a direct application of the fixed deterministic-matrix model, because grouping, overlap, aggregation and back projection are data-dependent pipeline operations.

The original method of Guo et al.\ \cite{guo2016efficient} uses residual-energy matching: for $P_j\in\R^{m\times n_j}$ with $k_j=\min(m,n_j)$ and singular values $\sigma_{j,1}\geq\cdots\geq\sigma_{j,k_j}$, it chooses the smallest rank satisfying
\begin{equation}
    \sum_{i=h+1}^{k_j}\sigma_{j,i}^{2}
    \leq m n_j\tau^2.
\end{equation}
The SURE-modified method keeps the pipeline fixed but replaces this local rank rule by
\begin{equation}
\begin{aligned}
    S_0(P_j)&=\norm{P_j}_F^2-m n_j\tau^2,\\[-0.2em]
    S_h(P_j)&=-m n_j\tau^2+
    \sum_{i=h+1}^{k_j}\sigma_{j,i}^{2}\\[-0.2em]
    &\quad+2\tau^2\Biggl\{|m-n_j|h+h\\[-0.2em]
    &\qquad\qquad
    +2\sum_{i=1}^{h}\sum_{\substack{\ell=1\\ \ell\neq i}}^{k_j}
        \frac{\sigma_{j,i}^{2}}{\sigma_{j,i}^{2}-\sigma_{j,\ell}^{2}}
    \Biggr\},\quad h=1,\ldots,k_j,
\end{aligned}
\end{equation}
and selects
\begin{equation}
    \widehat h_j\in\arg\min_{0\leq h\leq k_j}S_h(P_j).
\end{equation}
The denoised group is the rank-$\widehat h_j$ hard truncation, so the only algorithmic substitution from the original pipeline to the proposed pipeline is the replacement of residual-energy matching by SURE limiting-score minimization.

Figure \ref{fig:sure-modified-pipeline} and Algorithm \ref{alg:sure-svd} summarize the modified pipeline and its implementation-level steps.

\clearpage

\begin{figure}[H]
\centering
\includegraphics[width=0.80\textwidth]{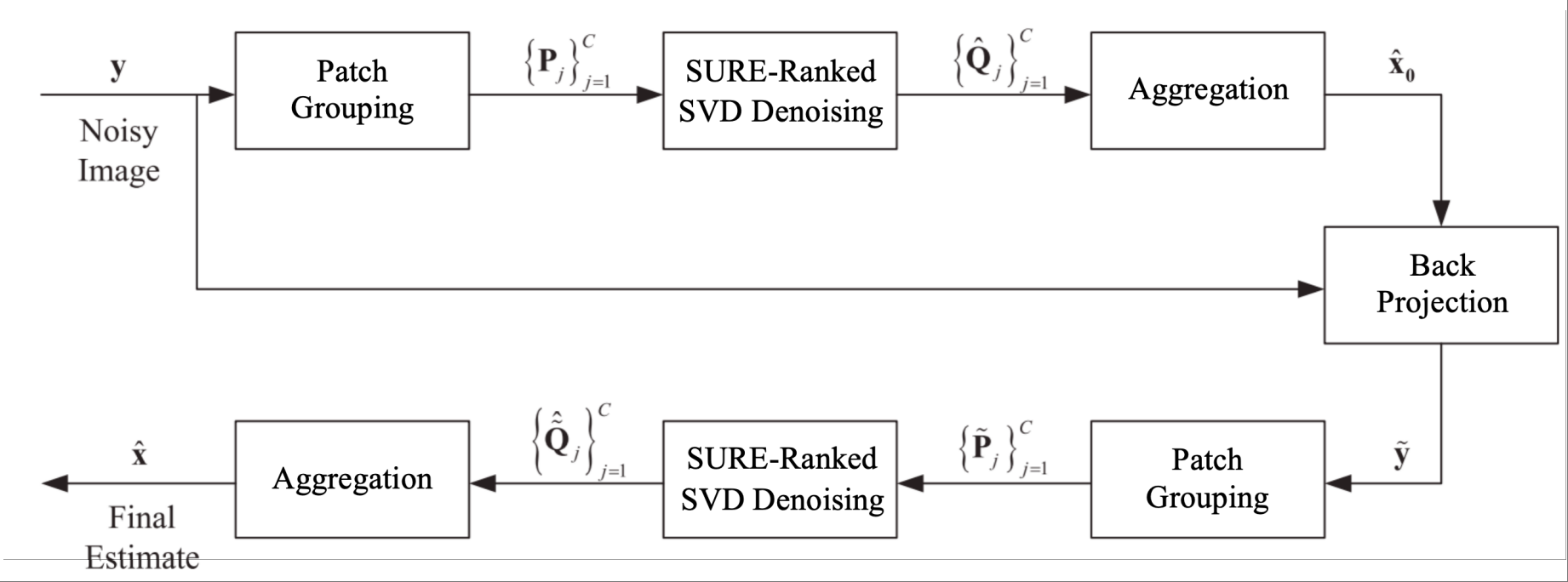}
\caption{SURE-modified two-pass SVD denoising pipeline. The method uses the original patch grouping, SVD truncation, aggregation, back projection, and refinement structure; only the local rank-selection rule inside each grouped-patch SVD block is changed.}
\label{fig:sure-modified-pipeline}
\end{figure}

\vspace{0.15em}
\input{figures/algorithm_box.tex}

\clearpage

\subsection{Paired BSD68 comparison and Wilcoxon tests}

To test whether the fixed-realization pattern is stable across more images, we also compare only the two controlled SVD pipelines on BSD68. For each image and noise level in the recorded result file, the same noisy realization is reused by both rank-selection rules. Thus the paired unit is the image at a fixed noise level, and the comparison isolates the effect of replacing residual-energy matching by SURE ranking. For PSNR and SSIM we apply a one-sided Wilcoxon signed-rank test to the paired differences
\[
    \Delta=\text{SURE-modified SVD}-\text{energy-matching SVD}
\]
at each noise level, testing for a positive signed-rank location shift of the paired-difference distribution.

\input{figures/table_bsd68_wilcoxon.tex}

Table \ref{tab:bsd68-wilcoxon} shows that the SURE modification is not uniformly beneficial. At the low-noise setting $\sigma=10$, the paired differences are negative, so the one-sided tests give no evidence for a SURE advantage. At $\sigma=30$ the two rules are close, with slightly negative average differences. At $\sigma=50$, however, SURE gives positive average gains in both PSNR and SSIM, with win rates of $65\%$ and $68\%$ and nominal one-sided Wilcoxon $p$-values below $0.05$. The runtime overhead is small in this implementation: averaged over the reported BSD68 runs, the SURE-modified pipeline takes $84.65$ seconds per image versus $83.60$ seconds for energy matching, about a $1.3\%$ increase. These results support a noise-regime interpretation: the SURE correction is most useful in the high-noise setting, rather than a uniformly dominant replacement for energy matching.

\clearpage

\subsection{Illustrative comparison with SVD-related methods}

We finally give an illustrative fixed-realization comparison involving K-SVD \cite{elad2006image}, LPG-PCA \cite{zhang2010two}, the original energy-matching SVD pipeline, and the SURE-modified SVD pipeline on Set12. The displayed methods are all connected to the singular-value-decomposition viewpoint: K-SVD uses SVD in its dictionary-update step, LPG-PCA uses local PCA, and the two SVD pipelines use local rank-truncated SVD. The tested 8-bit noise levels are
\begin{equation}
    \sigma\in\{10,30,50\}.
\end{equation}
These three levels are used as representative low, moderate, and high noise regimes. For image index $i$, the noisy image is generated after setting
\begin{equation}
    \operatorname{rng}(100i+\sigma),
    \qquad
    y=x+\sigma Z,
    \qquad Z_{ab}\stackrel{\mathrm{i.i.d.}}{\sim}N(0,1).
\end{equation}
For each image--noise pair, all methods use the same single noisy realization. K-SVD uses Ron Rubinstein's MATLAB implementation (\texttt{ksvdbox13}, \texttt{ompbox10}) with parameters
\begin{equation}
\begin{gathered}
    \texttt{blocksize}=8,
    \quad \texttt{dictsize}=256,
    \quad \texttt{sigma}=\sigma,
    \quad \texttt{maxval}=255, \\
    \texttt{trainnum}=40000,
    \quad \texttt{iternum}=10,
    \quad \texttt{memusage}=\texttt{'high'}.
\end{gathered}
\end{equation}
LPG-PCA uses the official MATLAB code of Zhang et al. with profile \texttt{'fast'} and $K=0$. The energy-matching and SURE columns are the controlled comparison of interest: they use identical Python pipeline code and differ only in the rank-selection function. K-SVD and LPG-PCA are included only to provide context for the fixed-realization image-denoising scale within the family of SVD- or PCA-related methods. The shared parameters of the two SVD pipelines are patch size $9\times9$, $10\times10$, or $11\times11$ according to noise level; $L=85$ similar patches; search-window half-size $35$; reference-patch stride $3$; back-projection parameter $\delta=0.5$; and, in the experiments, noise-update scaling factor $\gamma=0.65$. The full implementation and scripts are available at \url{https://github.com/ECFDPB/SURE-SVD-Denoising}.

PSNR is computed on the full image on the 8-bit scale. SSIM uses an $11\times11$ Gaussian window with standard deviation $1.5$, constants
\begin{equation}
    C_1=(0.01\cdot255)^2,
    \qquad
    C_2=(0.03\cdot255)^2,
\end{equation}
and the spatial average of the SSIM map, with equivalent MATLAB and Python convolution implementations.

\input{figures/table_final_comparison.tex}

\clearpage

Table \ref{tab:final-comparison} reports this fixed-realization comparison. K-SVD and LPG-PCA are included as SVD- or PCA-related reference points, but the controlled comparison for this paper is between the two SVD columns, where every pipeline component is shared except the rank-selection rule. In this setting, replacing residual-energy rank selection by SURE changes the average from $29.71/0.8302$ to $29.81/0.8358$, with improvements over energy matching in $24$ of $36$ PSNR entries and $23$ of $36$ SSIM entries. Because these are single noisy realizations and the average gain is small, these numbers should be read as modest descriptive evidence for the rank-selection replacement, not as a claim of statistical significance or superiority over existing denoising methods. The gains are strongest at the larger tested noise levels and are not uniform at $\sigma=10$. Figure \ref{fig:visual-house-sigma50} gives one qualitative high-noise example on House at $\sigma=50$.

\begin{figure}[H]
\centering
\includegraphics[width=0.88\textwidth]{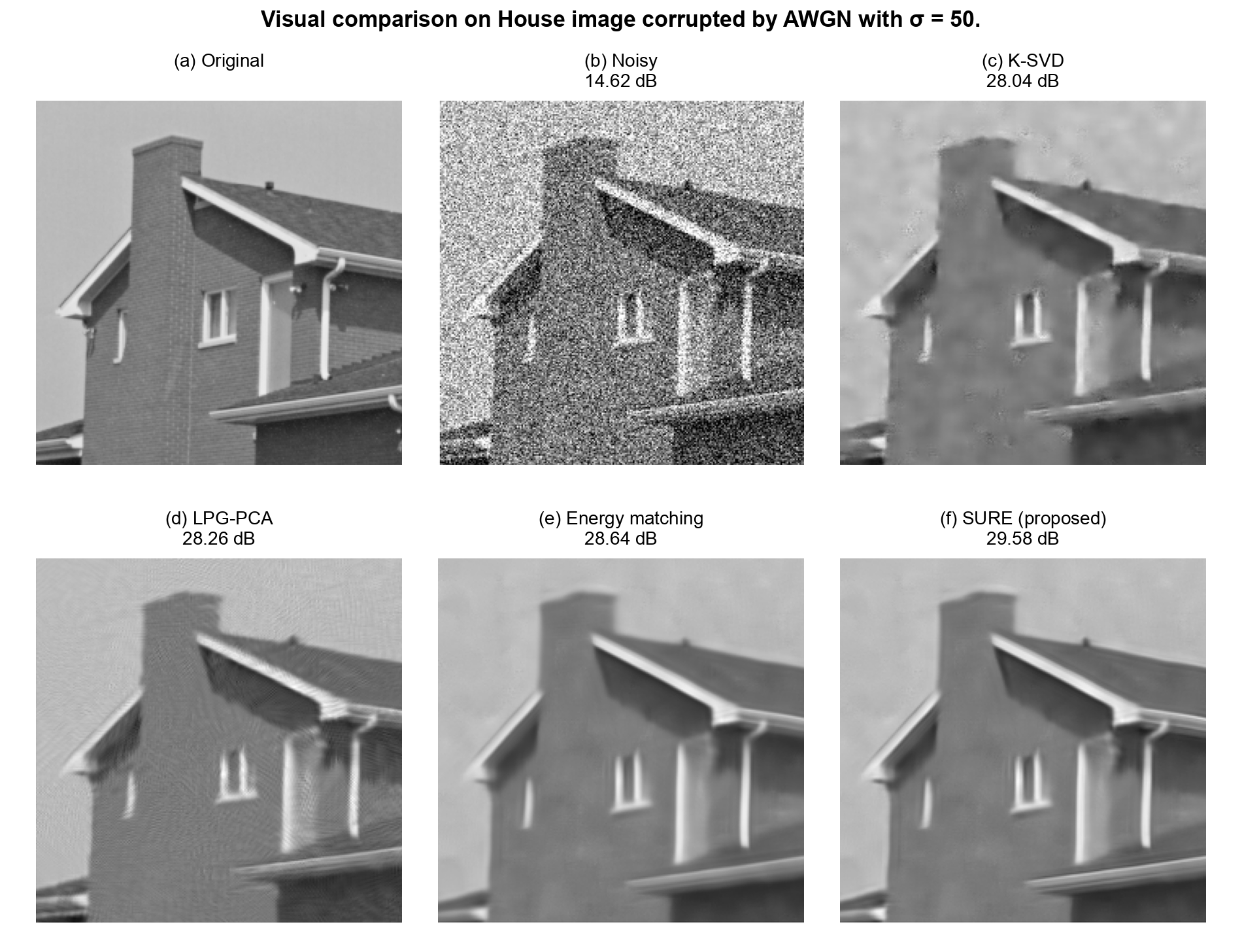}
\caption{Illustrative visual comparison on the House image corrupted by additive white Gaussian noise with $\sigma=50$. Panels show the original image, the shared noisy input, and the denoised outputs from K-SVD, LPG-PCA, energy matching, and the SURE-modified SVD pipeline. PSNR values are reported below the corresponding method labels for this single realization.}
\label{fig:visual-house-sigma50}
\end{figure}

\section{Conclusion and Discussion}

\paragraph{Contribution.}
This paper proposes a SURE-based threshold-ranking framework for SVD low-rank denoising. The central contribution is a reinterpretation of a biased hard-threshold SURE-like expression. We accept the negative result that singular value hard thresholding itself does not satisfy Stein's lemma \cite{hansen2017comment}; nevertheless, by introducing a smooth hard-threshold approximation, we obtain a fixed-threshold estimator whose SURE is exactly unbiased under Gaussian noise. For a fixed observed matrix and a finite candidate set separated from the singular values, strict inequalities in the limiting scores are eventually preserved by the smooth SURE objective as $\omega\to\infty$. This provides a computational ranking rule for finite-candidate SURE-based threshold tuning under the separation and post-selection caveats stated above. As in other data-driven SURE tuning methods, the selected threshold should be distinguished from the fixed-threshold estimators used to construct the unbiased risk estimates.

\paragraph{Limitations.}
The limitations are as follows. The ordering argument is finite-candidate and pointwise: thresholds must be separated from the observed singular values, and the result does not justify continuous optimization over all $\lambda\in\R$. The limiting score ranks candidates but is not an unbiased risk estimate for discontinuous hard thresholding or for a data-selected threshold. In the image pipeline, block matching and overlapping aggregation go beyond the fixed-matrix Gaussian model used in the proof. The Set12 table is a fixed-realization comparison, and the BSD68 Wilcoxon analysis uses one recorded noisy realization for each image--noise pair; it gives paired image-level evidence but does not quantify variability over alternative independent noise seeds, so a multiple-seed image benchmark is left for future work. The runtime measurements are implementation-dependent and are used only to indicate relative overhead.

\paragraph{Future work.}
Future work includes multiple-seed image benchmarks, broader matrix and image tests, post-selection risk correction, extensions to other nonsmooth estimators, and a controlled continuous-threshold theory compatible with the fixed-threshold SURE interpretation.

\end{document}

%% file: figures/algorithm_box.tex
\begin{center}
\begin{minipage}{0.88\textwidth}
\refstepcounter{algorithm}\label{alg:sure-svd}
\hrule
\vspace{0.25em}
\noindent{\scriptsize\textbf{Algorithm \thealgorithm.} Proposed SURE-modified SVD denoising}
\vspace{0.25em}
\hrule
\vspace{0.25em}
{\tiny
\begin{algorithmic}[1]
\REQUIRE Noisy image $\mathbf y\in\R^{H\times W}$ on the 8-bit intensity scale, noise standard deviation $\tau$ on the same scale
\ENSURE Denoised image $\widehat{\mathbf x}$
\STATE \textbf{Stage 1: initial denoising.}
\STATE Extract overlapping $\sqrt m\times\sqrt m$ patches from $\mathbf y$; the reference-patch stride selects reference locations, and the search-window half-size restricts candidates. For each reference patch, select the $L$ closest patches by $S(\mathbf y_j,\mathbf y_c)=\|\mathbf y_j-\mathbf y_c\|_2^2$ and form $P_j=[\mathbf y_j,\mathbf y_{c,1},\ldots,\mathbf y_{c,L}]\in\R^{m\times n_j}$, where $n_j=L+1$.
\FOR{each group matrix $P_j$, $j=1,\ldots,C$}
    \STATE Compute $P_j=U_j\diagop(\sigma_{j,1},\ldots,\sigma_{j,k_j})V_j^T$, where $k_j=\min(m,n_j)$.
    \STATE Choose the SURE rank $\widehat h_j\in\arg\min_{h\in\{0,1,\ldots,k_j\}}S_h(P_j)$, where
    \[
    \begin{aligned}
    S_0(P_j)&=\|P_j\|_F^2-m n_j\tau^2,\\[-0.2em]
    S_h(P_j)&=-m n_j\tau^2+\sum_{i=h+1}^{k_j}\sigma_{j,i}^2 \\
    &\quad +2\tau^2\left\{|m-n_j|h+h
    +2\sum_{i=1}^{h}\sum_{\substack{\ell=1\\ \ell\neq i}}^{k_j}
    \frac{\sigma_{j,i}^2}{\sigma_{j,i}^2-\sigma_{j,\ell}^2}\right\},\quad h\geq1.
    \end{aligned}
    \]
    \STATE Set $\widehat Q_j=U_j\diagop(\sigma_{j,1},\ldots,\sigma_{j,\widehat h_j},0,\ldots,0)V_j^T$.
    \STATE Set $w_j=1-\widehat h_j/(L+1)$ if $\widehat h_j<L+1$, and $w_j=1/(L+1)$ otherwise.
\ENDFOR
\STATE Aggregate all $\widehat Q_j$ by weighted averaging to obtain $\widehat{\mathbf x}_0$.
\STATE \textbf{Back projection:} form $\widetilde{\mathbf y}=\widehat{\mathbf x}_0+\delta(\mathbf y-\widehat{\mathbf x}_0)$ with $\delta=0.5$, and update $\widetilde\tau=\gamma\sqrt{\max\{\tau^2-\|\widetilde{\mathbf y}-\widehat{\mathbf x}_0\|_F^2/(HW),0\}}$, where $\gamma$ is a scaling factor.
\STATE \textbf{Stage 2: refinement.} Repeat Stage~1 on $\widetilde{\mathbf y}$ with noise level $\widetilde\tau$ to obtain $\widehat{\mathbf x}$.
\end{algorithmic}
\medskip
\noindent\textbf{Parameters:} patch size $9\times9$ if $\tau<20$, $10\times10$ if $20\leq\tau<40$, and $11\times11$ if $\tau\geq40$; these thresholds use the 8-bit scale; similar patches $L=85$; search-window half-size $35$; reference-patch stride $3$.

\vspace{0.2em}
{\noindent\textbf{Remark:} The only modification relative to Guo et al.\ \cite{guo2016efficient} is the rank-selection rule: energy matching is replaced by SURE-based limiting-score minimization. Patch grouping, SVD factorisation, truncation, aggregation and back projection remain identical. The branch $w_j=1/(L+1)$ for a full-rank group is the positive fallback aggregation weight inherited from the original LRA-SVD rule, rather than the continuous extension $1-\widehat h_j/(L+1)=0$.\par}
}
\vspace{0.25em}
\hrule
\end{minipage}
\end{center}

%% file: figures/table_bsd68_wilcoxon.tex
\begin{table}[t]
\centering
\caption{Paired BSD68 comparison between energy matching and the SURE-modified SVD pipeline using one recorded noisy realization per image--noise pair. Here $\Delta$ denotes SURE minus energy matching, and the $p$-values are one-sided Wilcoxon signed-rank tests for a positive signed-rank location shift at each noise level. Runtime entries are mean wall-clock seconds per image in this implementation.}
\label{tab:bsd68-wilcoxon}
{\scriptsize
\setlength{\tabcolsep}{3.0pt}
\renewcommand{\arraystretch}{0.92}
\resizebox{\textwidth}{!}{%
\begin{tabular}{@{}crrrrrrrr@{}}
\toprule
$\sigma$ & $N$ & $\Delta$PSNR & PSNR win & $p_{\rm PSNR}$ & $\Delta$SSIM & SSIM win & $p_{\rm SSIM}$ & Time EM/SURE \\
\midrule
10 & 68 & -0.517 & 4\% & 1.000 & -0.0068 & 10\% & 1.000 & 77.8/78.5 \\
30 & 68 & -0.050 & 44\% & 0.978 & -0.0016 & 44\% & 0.992 & 83.8/84.8 \\
50 & 68 & +0.118 & 65\% & 0.041 & +0.0129 & 68\% & 0.042 & 89.3/90.6 \\
\bottomrule
\end{tabular}%
}
}
\end{table}

%% file: figures/table_final_comparison.tex
\begin{table}[p]
\centering
\caption{Illustrative fixed-realization PSNR and SSIM comparison on Set12 under additive white Gaussian noise. Noise levels are reported on the 8-bit intensity scale. The controlled comparison of interest is between the energy-matching and SURE-modified SVD columns; K-SVD and LPG-PCA are included as SVD- or PCA-related reference points. Boldface marks the best PSNR and the best SSIM separately among the displayed methods in each image--noise row, with ties included.}
\label{tab:final-comparison}
{\tiny
\setlength{\tabcolsep}{2.0pt}
\renewcommand{\arraystretch}{0.86}
\resizebox{\textwidth}{!}{%
\begin{tabular}{@{}ll*{4}{cc}@{}}
\toprule
Image & $\sigma$ & \multicolumn{2}{c}{K-SVD} & \multicolumn{2}{c}{LPG-PCA} & \multicolumn{2}{c}{Energy matching} & \multicolumn{2}{c}{SURE proposed} \\
\cmidrule(lr){3-4}\cmidrule(lr){5-6}\cmidrule(lr){7-8}\cmidrule(l){9-10}
 & & PSNR & SSIM & PSNR & SSIM & PSNR & SSIM & PSNR & SSIM \\
\midrule
Cameraman & 10 & 33.64 & 0.9292 & 33.66 & 0.9258 & \textbf{34.08} & \textbf{0.9357} & 33.65 & 0.9345 \\
 & 30 & 28.01 & 0.8242 & 27.94 & 0.7958 & \textbf{28.26} & \textbf{0.8275} & \textbf{28.26} & 0.8237 \\
 & 50 & 25.53 & 0.7543 & 25.47 & 0.7009 & 25.32 & 0.7017 & \textbf{25.76} & \textbf{0.7684} \\
\addlinespace[0.15ex]
House & 10 & 36.01 & 0.9116 & 36.28 & 0.9162 & 36.57 & 0.9178 & \textbf{36.61} & \textbf{0.9190} \\
 & 30 & 31.26 & 0.8403 & 31.17 & 0.8106 & 31.81 & 0.8534 & \textbf{32.26} & \textbf{0.8584} \\
 & 50 & 28.05 & 0.7780 & 28.27 & 0.7151 & 28.65 & 0.8058 & \textbf{29.59} & \textbf{0.8203} \\
\addlinespace[0.15ex]
Pepper & 10 & 34.19 & 0.9278 & 34.06 & 0.9219 & 34.56 & 0.9314 & \textbf{34.64} & \textbf{0.9320} \\
 & 30 & 28.67 & 0.8443 & 28.40 & 0.8139 & 28.67 & 0.8481 & \textbf{28.94} & \textbf{0.8529} \\
 & 50 & 26.15 & 0.7851 & 25.72 & 0.7191 & 25.79 & 0.7881 & \textbf{26.42} & \textbf{0.7992} \\
\addlinespace[0.15ex]
Fishstar & 10 & 33.00 & 0.9306 & 33.17 & 0.9284 & \textbf{33.57} & \textbf{0.9340} & 33.11 & 0.9295 \\
 & 30 & 27.23 & 0.8153 & 27.31 & 0.8034 & 27.53 & 0.8224 & \textbf{27.68} & \textbf{0.8274} \\
 & 50 & 24.28 & 0.7195 & 24.41 & 0.6954 & 24.32 & 0.7225 & \textbf{24.53} & \textbf{0.7303} \\
\addlinespace[0.15ex]
Monarch & 10 & 33.75 & 0.9527 & 33.99 & 0.9526 & \textbf{34.59} & \textbf{0.9612} & 34.31 & 0.9598 \\
 & 30 & 27.80 & 0.8759 & 27.76 & 0.8582 & 28.08 & 0.8828 & \textbf{28.35} & \textbf{0.8929} \\
 & 50 & 25.17 & 0.8051 & 24.96 & 0.7610 & 25.14 & 0.8090 & \textbf{25.58} & \textbf{0.8225} \\
\addlinespace[0.15ex]
Airplane & 10 & 33.00 & 0.9249 & 33.00 & 0.9202 & \textbf{33.26} & \textbf{0.9283} & 32.65 & 0.9255 \\
 & 30 & 27.19 & 0.8336 & 27.10 & 0.7997 & \textbf{27.38} & \textbf{0.8391} & 27.32 & 0.8378 \\
 & 50 & 24.56 & 0.7606 & 24.55 & 0.6900 & 24.62 & 0.7674 & \textbf{24.73} & \textbf{0.7682} \\
\addlinespace[0.15ex]
Parrot & 10 & 33.18 & 0.9281 & 33.42 & 0.9261 & \textbf{33.65} & \textbf{0.9336} & 33.10 & 0.9292 \\
 & 30 & 27.54 & 0.8259 & 27.68 & 0.8020 & 27.97 & 0.8316 & \textbf{27.99} & \textbf{0.8324} \\
 & 50 & 25.40 & 0.7704 & 25.29 & 0.7095 & 25.77 & 0.7783 & \textbf{25.82} & \textbf{0.7810} \\
\addlinespace[0.15ex]
Barbara & 10 & 34.39 & 0.9361 & 35.02 & 0.9404 & \textbf{35.41} & \textbf{0.9446} & 35.29 & 0.9444 \\
 & 30 & 28.56 & 0.8266 & 29.24 & 0.8411 & 29.62 & 0.8619 & \textbf{29.98} & \textbf{0.8729} \\
 & 50 & 25.38 & 0.7170 & 26.41 & 0.7373 & 26.36 & 0.7654 & \textbf{26.94} & \textbf{0.7827} \\
\addlinespace[0.15ex]
Ship & 10 & 33.64 & 0.8868 & 33.72 & 0.8880 & \textbf{33.93} & \textbf{0.8896} & 33.74 & 0.8857 \\
 & 30 & 28.34 & 0.7472 & 28.27 & 0.7393 & 28.32 & 0.7474 & \textbf{28.46} & \textbf{0.7524} \\
 & 50 & 25.88 & 0.6656 & 25.87 & 0.6340 & 25.71 & 0.6643 & \textbf{26.05} & \textbf{0.6744} \\
\addlinespace[0.15ex]
Man & 10 & 33.54 & 0.9018 & 33.71 & 0.9025 & \textbf{33.96} & \textbf{0.9077} & 33.71 & 0.9058 \\
 & 30 & 28.17 & \textbf{0.7525} & 28.18 & 0.7413 & 28.16 & 0.7506 & \textbf{28.19} & 0.7506 \\
 & 50 & 26.02 & 0.6697 & 26.05 & 0.6356 & 25.87 & 0.6695 & \textbf{26.21} & \textbf{0.6809} \\
\addlinespace[0.15ex]
Couple & 10 & 33.52 & 0.9004 & 33.62 & 0.9011 & \textbf{33.93} & \textbf{0.9063} & 33.71 & 0.9013 \\
 & 30 & 27.94 & 0.7509 & 28.08 & 0.7542 & 28.18 & 0.7643 & \textbf{28.39} & \textbf{0.7726} \\
 & 50 & 25.31 & 0.6425 & 25.50 & 0.6343 & 25.25 & 0.6504 & \textbf{25.58} & \textbf{0.6665} \\
\midrule
Average &  & 29.39 & 0.8235 & 29.49 & 0.8042 & 29.71 & 0.8302 & \textbf{29.81} & \textbf{0.8358} \\
\bottomrule
\end{tabular}%
}
}
\end{table}